%
%
%

\documentclass[11pt]{amsart}

\usepackage[a4paper,hmargin=3.5cm,vmargin=3.5cm]{geometry}
\usepackage{amsfonts,amssymb,amscd,amstext}
\usepackage{graphicx}
\usepackage[dvips]{epsfig}
\usepackage{quoting}
\usepackage{hyperref}


\usepackage{fancyhdr}
\pagestyle{fancy}
\fancyhf{}

\input xy
\xyoption{all}



\usepackage{enumerate}
\usepackage{titlesec}
\usepackage{mathrsfs}

\pretolerance=2000
\tolerance=3000



\linespread{1.03}

\setlength{\parskip}{0.5em}

\titleformat{\section}
{\filcenter\bfseries\large} {\thesection{.}}{0.2cm}{}
\titleformat{\subsection}[runin]
{\bfseries} {\hspace*{4mm}\thesubsection{.}}{0.15cm}{}[.]
\titleformat{\subsubsection}[runin]
{\em}{\thesubsubsection{.}}{0.15cm}{}[.]

\usepackage[up,bf]{caption}


\newtheorem{theorem}{Theorem}[section]

\newtheorem{lemma}[theorem]{Lemma}

\theoremstyle{definition}
\newtheorem{definition}[theorem]{Definition}
\newtheorem{remark}[theorem]{Remark}

\numberwithin{equation}{section}
\numberwithin{figure}{section}


\newcommand\Acal{\mathcal{A}}

\newcommand\Ccal{\mathcal{C}}

\newcommand\Hcal{\mathcal{H}}

\newcommand\Kcal{\mathcal{K}}

\newcommand\Mcal{\mathcal{M}}

\newcommand\Rcal{\mathcal{R}}




\newcommand\C{\mathbb{C}}
\newcommand\D{\overline{\mathbb D}}

\renewcommand\D{\mathbb D}

\newcommand\N{\mathbb{N}}

\newcommand\Z{\mathbb{Z}}

\renewcommand\b{\mathbb{B}}
\renewcommand\c{\mathbb{C}}
\newcommand\cd{\overline{\mathbb D}}
\newcommand\cp{\mathbb{CP}}
\renewcommand\d{\mathbb D}

\newcommand\n{\mathbb{N}}
\renewcommand\r{\mathbb{R}}

\renewcommand\t{\mathbb{T}}
\newcommand\z{\mathbb{Z}}



\newcommand\mgot{\mathfrak{m}}

%
%

%
%


%
%

%
%

\newcommand\wh{\widehat}

\newcommand\ol{\overline}

%
%

\newcommand\dist{\mathrm{dist}}
\renewcommand\span{\mathrm{span}}
\newcommand\length{\mathrm{length}}

\newcommand\IE{\mathcal{I}_{E}}
\newcommand\Bo{\mathcal{B}}

\def\dist{\mathrm{dist}}
\def\span{\mathrm{span}}
\def\length{\mathrm{length}}


\usepackage{color}

\begin{document}


\fancyhead[LO]{Complete meromorphic curves with Jordan boundaries}
\fancyhead[RE]{T.\ Vrhovnik}
\fancyhead[RO,LE]{\thepage}

\thispagestyle{empty}


\begin{center}

{\bf\LARGE Complete meromorphic curves \protect\\ with Jordan boundaries}

\medskip

%
%
{\large\bf Tja\v{s}a Vrhovnik}
\end{center}


%
%
\medskip

\begin{quoting}[leftmargin={5mm}]
{\small
\noindent {\bf Abstract}\hspace*{0.1cm}
We prove that given a finite set $E$ in a bordered Riemann surface $\Rcal$, there is a continuous map $h\colon \overline\Rcal\setminus E\to\C^n$ ($n\geq 2$) such that $h|_{\Rcal\setminus E} \colon \Rcal\setminus E\to\C^n$ is a complete holomorphic immersion (embedding if $n\geq 3$) which is meromorphic on $\Rcal$ and has effective poles at all points in $E$, and $h|_{b\overline\Rcal} \colon b\overline\Rcal\to\C^n$ is a topological embedding. In particular, $h(b\overline\Rcal)$ consists of the union of finitely many pairwise disjoint Jordan curves which we ensure to be of Hausdorff dimension one. We establish a more general result including uniform approximation and interpolation.


\noindent{\bf Keywords}\hspace*{0.1cm} 
complete complex curve,
meromorphic curve,
Riemann surface


\noindent{\bf Mathematics Subject Classification (2020)}\hspace*{0.1cm} 
32H02, 
32H04, 
32E30, 
32B15. 
}
\end{quoting}



\section{Introduction}\label{sec:intro}

\noindent 
A \emph{bordered Riemann surface} $\Rcal$ is an open connected Riemann surface which is the interior of a compact one dimensional smoothly bounded complex manifold $\ol{\Rcal}$, whose boundary $b\ol{\Rcal} = \ol{\Rcal}\setminus \Rcal$ consists of finitely many Jordan curves. Its closure $\ol{\Rcal}$ is called a \emph{compact bordered Riemann surface}. We denote by $\D$ the open unit disc in $\C$.

\emph{Yang problem} from 1977 asks whether there exist complete immersed complex submanifolds $\phi \colon M^k \to \c^n$ $(1\leq k < n)$ with bounded image \cite{Yang1977,yang1977curvatures}. Recall that $\phi$ is \emph{complete} if the Riemannian metric on $M$ induced by the Euclidean metric on $\C^n$ via $\phi$ is complete in the classical sense. The first positive answer to Yang's question was given by P. W. Jones in 1979 \cite{jones1979complete}. He constructed a complete bounded holomorphic immersion $\D \to \C^2$, a complete bounded holomorphic embedding $\D \to \C^3$, and a complete proper holomorphic embedding of the disc to the unit ball of $\C^4$.
We refer the reader to \cite{alarcon2022yang}, where a brief history of the problem and most important results are presented.

Jones' existence theorems were extended only in 2013 by A. Alarc{\'o}n and F. J. L{\'o}pez \cite{alarcon2013null}, who constructed examples of complete bounded immersed complex curves in $\C^2$ and embedded in $\C^3$ with arbitrary topology; see also \cite{alarcon2014null}. However, their method does not enable to control the complex structure of the curve, except of course in the simply-connected case, when every such curve must be biholomorphic to the disc. This more difficult task was carried out by A. Alarc{\'o}n and F. Forstneri{\v{c}}, who, using more powerful complex analytic tools, in particular, the Riemann-Hilbert method and the technique for exposing boundary points \cite{forstneric2009bordered}, proved that every bordered Riemann surface admits a complete proper holomorphic immersion to the unit ball of $\c^2$ and a complete proper holomorphic embedding to the unit ball of $\c^3$ \cite[Theorem 1]{alarcon2013every}.
Using a similar approach, but with more precision, A. Alarc{\'o}n, B. Drinovec Drnov{\v{s}}ek, F. Forstneri{\v{c}} and F. J. L{\'o}pez constructed complete immersed complex curves in $\C^2$ and embedded in $\C^3$, normalized by any given bordered Riemann surface and being bordered by Jordan curves \cite[Theorem 1.6]{alarcon2015every}. 

Our first main result, which we prove in Section \ref{sec:proof-main}, generalizes the aforementioned statement from \cite{alarcon2015every} by enabling the existence of poles, and providing interpolation of a given initial map at the polar set and at a given finite set. 
Here is its precise formulation.
\begin{theorem}\label{th:main}
Let $\Rcal$ be a bordered Riemann surface and $E, \Lambda \subset \Rcal$ be finite disjoint subsets. Let $h \colon \ol{\Rcal}\setminus E \to \c^n$ ($n \geq 2$) be a continuous map which is meromorphic on $\Rcal$ having poles at all points in $E$.
Then for any number $\varepsilon > 0$ there exists a continuous map $\hat{h} \colon \ol{\Rcal} \setminus E \to \c^n$, which is a meromorphic immersion on $\Rcal$, satisfying the following properties:
\begin{itemize}
\item[\rm (i)] the difference $h-\hat{h}$ is holomorphic on $\Rcal$, vanishing to any given finite order on $E \cup \Lambda$;

\smallskip
\item[\rm (ii)] $|h-\hat{h}|(p) < \varepsilon \ \textrm{for all }p \in \ol{\Rcal}$;

\smallskip
\item[\rm (iii)] $\hat{h} \colon \Rcal \setminus E \to \c^n$ is complete;

\smallskip
\item[\rm (iv)] $\hat{h}|_{b\ol{\Rcal}} \colon b\ol{\Rcal} \to \c^n$ is a topological embedding and $\hat{h}(b\ol{\Rcal})$ consists of a pairwise disjoint union of finitely many Jordan curves of Hausdorff dimension one.
\end{itemize}
Furthermore, if $n\geq 3$, then $\hat{h} \colon \ol{\Rcal} \setminus E \to \c^n$ can be chosen an embedding with these properties, provided that $h|_{\Lambda}$ is injective.
\end{theorem}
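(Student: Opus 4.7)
The plan is to merge the recursive construction of \cite{alarcon2015every} with a meromorphic interpolation step that absorbs the prescribed polar part of $h$.

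\emph{Reduction to the holomorphic case.} I would first strip off the singular part of $h$. An application of Mittag--Leffler on an open Riemann surface containing $\ol{\Rcal}$ yields a meromorphic map $g\colon \ol{\Rcal}\to\C^n$ whose principal parts at the points of $E$ coincide with those of $h$ and which has no other poles. Then $h_0:=h-g\colon \ol{\Rcal}\to\C^n$ is continuous on $\ol{\Rcal}$ and holomorphic on $\Rcal$. Writing $\hat h = \hat f+g$, Theorem \ref{th:main} reduces to producing a continuous map $\hat f\colon \ol{\Rcal}\to\C^n$, holomorphic on $\Rcal$, with $\hat f-h_0$ holomorphic on $\ol{\Rcal}$ and vanishing to the preassigned order on $E\cup\Lambda$, with $\|\hat f-h_0\|_{\ol{\Rcal}}<\varepsilon$, and such that $\hat f+g$ is a complete immersion (embedding when $n\ge 3$) of $\Rcal\setminus E$ whose boundary trace is an embedded disjoint union of Jordan curves of Hausdorff dimension one.

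\emph{Inductive construction.} Fix a base point $p_0\in\Rcal\setminus E$, a normal exhaustion $K_1\subset K_2\subset\cdots$ of $\Rcal\setminus E$ by smoothly bounded compact sets, and a sequence $d_k\to\infty$. I would build a sequence $f_k\colon \ol{\Rcal}\to\C^n$ of continuous maps, holomorphic on $\Rcal$, with $f_0=h_0$, such that each $f_{k+1}-f_k$ is holomorphic on $\ol{\Rcal}$, bounded by $\varepsilon\,2^{-k-1}$ in the sup norm, and vanishes to the prescribed order on $E\cup\Lambda$; in addition $f_k+g$ should be an immersion (embedding for $n\ge 3$) on a neighbourhood of $K_k\setminus E$, and the intrinsic distance it induces from $p_0$ to $\ol{\Rcal}\setminus K_k$ should exceed $d_k$. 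Each inductive step uses two ingredients, both performed in the finite-codimension subspace of perturbations vanishing to the required order on $E\cup\Lambda$: Riemann--Hilbert type boundary patches in the style of \cite[Theorem~1]{alarcon2013every} and \cite{alarcon2015every} to force the intrinsic boundary distance beyond $d_k$, alternated with the exposing-of-boundary-points technique from \cite{forstneric2009bordered} applied at a countable dense set of boundary points to secure injectivity of the limit boundary trace.

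\emph{Jordan boundary of Hausdorff dimension one.} The main obstacle is the sharper regularity in (iv). Uniform convergence with the exposing-of-points scheme guarantees that the limit boundary is a finite pairwise disjoint union of Jordan curves, but controlling their Hausdorff dimension requires quantitative rectifiability estimates at every step. Following \cite{alarcon2015every}, I would perform each Riemann--Hilbert patch over a very thin annulus around a short boundary arc, with $C^1$-small tangential oscillation, and arrange each exposed-point modification to replace a boundary arc by a short, nearly-straight $C^1$ curve of controlled length in a suitable boundary chart. Choosing the length increments summable in $k$ produces a limit boundary consisting of rectifiable Jordan arcs, hence of Hausdorff dimension one. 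Harmonizing this quantitative length control with the Riemann--Hilbert enlargement \emph{and} with the interpolation conditions at $E\cup\Lambda$ is the technical heart of the argument.

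\emph{Conclusion.} The uniform limit $\hat f=\lim_k f_k$ is continuous on $\ol{\Rcal}$ and holomorphic on $\Rcal$, and $\hat f-h_0$ is holomorphic on $\ol{\Rcal}$ vanishing to the prescribed order on $E\cup\Lambda$; adding $g$ recovers (i) and (ii). The immersion/embedding property on each $K_k$ survives since subsequent perturbations are $C^1$-small there, and the exhaustion gives (iii) on all of $\Rcal\setminus E$; completeness is immediate from $d_k\to\infty$ because a divergent curve in $\Rcal\setminus E$ eventually leaves every $K_k$. For $n\ge 3$, global injectivity is preserved in each step by a standard transversality argument, using that the double locus of a complex curve in $\C^n$ has real codimension $2(n-2)>0$. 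Combined with the previous paragraph this gives (iv), completing the proof.
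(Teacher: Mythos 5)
Your reduction to the holomorphic case via Mittag--Leffler is a reasonable normalization, but it is essentially cosmetic: since the geometric conditions (immersion, completeness, boundary embedding) refer to $\hat f+g$ rather than $\hat f$, every single inductive step must still be carried out for the meromorphic map $\hat f+g$. In particular the conformal-shrinking step and the Riemann--Hilbert patch touch $\hat f+g$, not $\hat f$, so the decomposition $\hat h=\hat f+g$ is not preserved under the operations you perform. The paper instead works directly in the class $\IE(\Rcal,\C^n)$ and handles the Cousin--I gluing for maps with poles by passing momentarily to $\CP^n$-valued maps; your reduction buys no real simplification at that point.

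The genuine gap is in part (iv). You propose to control the Hausdorff dimension by making the boundary rectifiable, ``choosing the length increments summable in $k$.'' This is incompatible with your own completeness requirement. The Riemann--Hilbert patch that enlarges $\dist_{\hat f}(p_0,b\ol\Rcal)$ pushes boundary arcs in a direction orthogonal to the position vector by an amplitude $\eta_k$, and completeness forces $\sum_k\eta_k=\infty$; there is no ``$C^1$-small tangential oscillation'' in this method. Consequently the boundary length cannot be kept bounded, and the limit Jordan curves are \emph{not} rectifiable in general. The paper's argument for Hausdorff dimension one is of a completely different nature: at step $j$ one arranges a finite set $A_j\subset h_j(b\ol\Rcal)$ of cardinality $\tau_j^{\,j+1}$ that is a $\tau_j^{-j}$-net of the boundary image (this is condition $(7_j)$), and requires $\varepsilon_j<\tau_j^{-j}$. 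Since $\|\hat h-h_j\|_{0,\ol\Rcal}<2\varepsilon_{j}$, the balls of radius $4\tau_j^{-j}$ centered at $A_j$ cover $\hat h(b\ol\Rcal)$, which gives
\[
\tau_j^{\,j+1}\Bigl(\tfrac{4}{2\tau_j^{\,j}}\Bigr)^{1+1/j}=2^{1+1/j}\le 4,
\]
so $\Hcal^{1+1/j}(\hat h(b\ol\Rcal))<\infty$ for every $j$ and the dimension is at most one (the lower bound being automatic for a disjoint union of Jordan curves). Without an ingredient of this kind your sketch does not establish (iv).

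A secondary omission: simply running the exposing-points technique at a countable dense set of boundary points does not by itself give injectivity of the boundary trace of the \emph{limit}. One needs a quantitative separation estimate that survives the summable perturbations; the paper encodes this in $(6_j)$ by requiring $\varepsilon_j<\tfrac{1}{2j^2}\inf\{|h_{j-1}(p)-h_{j-1}(q)|: p,q\in b\ol\Rcal,\ d(p,q)>1/j\}$ and then shows $|\hat h(p)-\hat h(q)|\ge \tfrac12|h_{j_0}(p)-h_{j_0}(q)|>0$ via a telescoping product. You would need something analogous.
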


Note that, by condition (i), $\hat{h}$ has effective poles at all points in $E$, and only there; in particular, $\lim_{p\to E}|h(p)|=\lim_{p\to E}|\hat h(p)|=+\infty$. We emphasize that $\lim_{p\to E}|\hat h(p)-h(p)|=0$ in view of (i).
Coming back to the Yang problem, observe that the complete meromorphic curves given by Theorem \ref{th:main} are bounded outside any neighbourhood of the polar set $E$.
Also note that choosing the sets $E$ and $\Lambda$ to be empty we obtain \cite[Theorem 1.6]{alarcon2015every}.

Our second main result is Theorem \ref{th:countably}, stated and proved in Section \ref{sec:countably}. Its formulation is almost analogous to the above theorem, the main difference being the source domain -- a complement in a compact Riemann surface of a countable union of pairwise disjoint smoothly bounded closed discs. However, due to some technical limitations, the initial map has to be defined on a slightly bigger domain than the approximating map provided by the theorem. Likewise, this generalizes a result by A. Alarc{\'o}n and F. Forstneri{\v{c}} \cite[Theorem 1.8]{alarcon2021calabi} by introducing poles into the picture, and granting interpolation at the poles and at a given finite set. Our method of proof broadly follows that in \cite{alarcon2015every,alarcon2021calabi}, but proceeding with extra precision and introducing an additional idea which enables us to deal with poles.


\section{Technical lemmas}

\subsection{Preliminaries and notation}\label{subsec:pre}

\noindent
We denote by $|\cdot|$ the Euclidean norm and by $\length(\cdot)$ the Euclidean length in $\c^n$. 
For a vector $v\in \c^n$, let $\span \{v\} = \{tv \colon t\in \c \}$ be its linear span.
Further, let $\d = \{t\in \c \colon |t|<1\}$ denote the open unit disc and $\t = b\ol{\d}=\{t\in \c \colon |t|=1\}$ the unit circle in $\c$.
If $f \colon K \to \c^n$ is a continuous map of a compact topological space $K$, then we set $||f||_{0,K}=\sup_{p\in K}|f(p)|$ for the maximum norm of $f$ on $K$.

Let $\Rcal$ be a bordered Riemann surface. A domain $\Mcal$ in $\Rcal$, which is itself a bordered Riemann surface, is called a \emph{bordered domain}, hence $\Mcal$ has smooth boundary. Any bordered Riemann surface $\Rcal$ can be realized as a relatively compact bordered domain in a larger open Riemann surface $\wh{\Rcal}$. Throughout this paper, we shall always assume this situation; therefore it makes sense to talk about holomorphic or meromorphic maps on $\ol{\Rcal}$.
We denote by $\Bo(\Rcal)$ the family of bordered domains $\Mcal \Subset \Rcal$ such that the closure $\ol{\Mcal}$ is Runge in $\Rcal$ and the set $\Rcal \setminus \ol{\Mcal}$ is a finite union of pairwise disjoint open annuli. In particular, $\ol{\Mcal}$ is a strong deformation retract of $\Rcal$. Recall that a compact subset of an open Riemann surface is \emph{Runge} (or \emph{holomorphically convex}) if its complement has no relatively compact connected components.

Given a compact set $X \subset \ol{\Rcal}$, we denote by $\Acal(X)^n = \Acal(X,\c^n)$ the family of continuous maps $f\colon X\to \c^n$ that are holomorphic in the interior $\mathring{X}$.

If $f \colon M \to \c^n$ is an immersed submanifold, then 
\begin{equation*}
\dist_{f}(p,q)=\inf \{ \length(f\circ \gamma) \colon \gamma \subset M \ \textrm{path connecting } p \ \textrm{and } q\}
\end{equation*}
denotes the intrinsic Riemannian distance in $M$ induced by the Euclidean distance in $\c^n$ via $f$.
An immersion $f \colon M \to \c^n$ is said to be \emph{complete} if the path $f \circ \gamma$ has infinite Euclidean length in $\c^n$ for any divergent path $\gamma$ in $M$. 
Recall that a path $\gamma \colon [0,1) \to M$ is \emph{divergent} if $\gamma(t) \notin K$ for any compact set $K\subset M$ as $t\to 1$.
We denote by $\Ccal_{d}(M,p_0)$ the family of divergent paths in $M$ with the initial point $p_0 \in M$. 
If $M$ is the interior $M=\ol{M} \setminus bM$ of a compact manifold $\ol{M}$ with nonempty boundary $bM$, then we also have the bigger family $\Ccal_{qd}(M,p_0)$ of quasidivergent paths in $M$ with the initial point $p_0 \in M$. 
A path $\gamma \colon [0,1) \to M$ is called \emph{quasidivergent}, if there exist a point $p \in bM$ and an increasing sequence of numbers $0<t_1<t_2<\dots<1$ such that $\lim_{i\to \infty}t_i=1$ and $\lim_{i\to \infty}\gamma(t_i)=p\in bM$.
Note that by compactness of $\ol{M}$, a path $[0,1) \to M$ is quasidivergent if and only if its image is not relatively compact in $M$.
Moreover, we denote the intrinsic diameter of $(\ol{M},f)$ with respect to a base point $p_0 \in M$ as
\begin{equation*}
\dist_f(p_0,bM) = \inf \{\length(f\circ \gamma) \colon \gamma \subset M \ \textrm{divergent path with } \gamma(0)=p_0\} \in [0,+\infty].
\end{equation*}

\begin{definition} \label{def:IE}
Given a finite subset $E \subset \Rcal$ of a bordered Riemann surface $\Rcal$, we shall denote by $\IE(\Rcal,\c^n)$ the set of all holomorphic maps $\ol{\Rcal} \setminus E \to \c^n$ which extend to meromorphic immersions on $\Rcal$ with poles at all points in $E$, i.e., meromorphic maps $\ol{\Rcal} \to \c^n$ with the polar set $E$ such that the restrictions to $\Rcal \setminus E$ are holomorphic immersions.
\end{definition}


\subsection{The main lemma}\label{subsec:lemma}

\noindent
The main result of this subsection is Lemma \ref{lem:main}, which is the key tool in the proofs of Theorems \ref{th:main} and \ref{th:countably}.

%
%
\begin{lemma}\label{lem:main}
Let $\Rcal$ be a bordered Riemann surface and $E, \Lambda \subset \Rcal$ be finite disjoint subsets. For any map $f \in \IE(\Rcal,\c^2)$, point $p_0 \in \Rcal \setminus (E\cup \Lambda)$, integer $d \in \Z_+$ and positive numbers $\varepsilon>0$ (small), $\tau>0$ (big) there exists a map $\hat{f}\in \IE(\Rcal,\c^2)$ satisfying:
\begin{itemize}
\item[\rm (i)] the map $\hat{f}-f$ is holomorphic on $\Rcal$, vanishing to order $d$ at every point in $E \cup \Lambda$;

\smallskip
\item[\rm (ii)] $\dist_{\hat{f}}(p_0, b\ol{\Rcal}) > \tau$;

\smallskip
\item[\rm (iii)] $|\hat{f}-f|(p) < \varepsilon \ \textrm{for all }p \in \ol{\Rcal}$;

\smallskip
\item[\rm (iv)] $\hat{f}|_{b\ol{\Rcal}} \colon b\ol{\Rcal} \to \c^2$ is a topological embedding.
\end{itemize}
\end{lemma}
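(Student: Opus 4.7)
The plan is to build $\hat f$ by adding to $f$ a small holomorphic correction $g\colon\ol{\Rcal}\to\c^2$ that vanishes to order $d$ at every point of $E\cup\Lambda$: writing $\hat f=f+g$ then automatically yields (i) and (iii), and $\hat f$ remains in $\IE(\Rcal,\c^2)$ by openness of the immersion condition provided $g$ is sufficiently small, so only (ii) and (iv) are substantive. This places us in the framework of \cite{alarcon2015every,alarcon2021calabi}, the novelty being that our perturbations are required to vanish to order $d$ on the fixed interior set $E\cup\Lambda$ while remaining effective near $b\ol{\Rcal}$.

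To reconcile these two requirements I would fix once and for all a holomorphic function $\sigma\colon\wh{\Rcal}\to\c$ having a zero of order exactly $d$ at every point of $E\cup\Lambda$ and no other zeros on a neighborhood of $\ol{\Rcal}$; such $\sigma$ is easily produced from a finite product of local uniformizers together with Weierstrass-type constructions on the open Riemann surface $\wh{\Rcal}$. Since $E\cup\Lambda$ is disjoint from $b\ol{\Rcal}$, on a fixed open collar of the boundary $|\sigma|$ is bounded above and below by positive constants, so restricting all subsequent corrections to have the form $\sigma\,\varphi$ with $\varphi\colon\ol{\Rcal}\to\c^2$ holomorphic both enforces (i) automatically and yields size estimates on $\sigma\varphi$ near $b\ol{\Rcal}$ comparable to those on $\varphi$. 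This is the ``additional idea'' alluded to in the introduction.

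With this device in place, the construction follows the recursive scheme of Alarc\'on--Forstneri\v c. I would fix an exhaustion $\Mcal_0\Subset\Mcal_1\Subset\cdots\Subset\Rcal$ by bordered domains in $\Bo(\Rcal)$ with $E\cup\Lambda\cup\{p_0\}\subset\Mcal_0$, and then inductively produce meromorphic immersions $f_k=f+\sigma\varphi_k$ with $\dist_{f_k}(p_0,b\ol{\Rcal})>k$ and $\|\sigma(\varphi_k-\varphi_{k-1})\|_{0,\ol{\Rcal}}$ summably small. Each inductive step has two components: first, an application of the exposing-boundary-points technique from \cite{forstneric2009bordered} to straighten the image of short arcs of $f_{k-1}(b\ol{\Rcal})$, and second, a Riemann--Hilbert modification on the annular collar $\Rcal\setminus\ol{\Mcal}_0$ attaching short holomorphic discs at the exposed points so as to stretch each component of $f_{k-1}(b\ol{\Rcal})$ outward by a prescribed amount. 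Both of these are standard perturbations of $f_{k-1}$; multiplying each raw perturbation by $\sigma$ preserves (i) while losing only a bounded factor in the boundary estimates, which is absorbed into the freely chosen smallness of $\varphi_k-\varphi_{k-1}$. After finitely many steps the intrinsic distance exceeds $\tau$, giving (ii).

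The boundary-embedding condition (iv) is achieved by a final generic perturbation of the same form $\sigma\varphi$, as in \cite{alarcon2015every}: in the target $\c^2$ of real dimension four, a small generic holomorphic perturbation of an immersion embeds the one-real-dimensional boundary $b\ol{\Rcal}$, and the density of admissible $\varphi$ is not affected by multiplication by $\sigma$ since $\sigma$ is nowhere zero on $b\ol{\Rcal}$. The main obstacle I expect is keeping all ingredients of the induction simultaneously under control: the boundary must keep growing in intrinsic diameter, the polar part of $f$ at $E$ must remain unchanged, the corrections must vanish to order $d$ at $E\cup\Lambda$, and everything must stay uniformly small on $\ol{\Rcal}$. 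The $\sigma$-factor trick decouples the interpolation from the boundary stretching, reducing the lemma to careful bookkeeping of the known scheme from \cite{alarcon2015every,alarcon2021calabi}.
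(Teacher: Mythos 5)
Your $\sigma$-factor trick is a genuine alternative to what the paper does: the paper does not factor the perturbations through a fixed function vanishing on $E\cup\Lambda$, but instead builds jet interpolation directly into every stage -- Mergelyan approximation with jet interpolation, the exposing diffeomorphism $\Phi$ chosen tangent to the identity to high order at $E\cup\Lambda$ (condition~(a9) in the paper's Step~2), and a Cousin-I gluing step augmented with interpolation. So at the level of strategy for condition~(i), you have proposed something different.

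However, there are two substantive gaps. First, the $\sigma$-trick does not mesh with the exposing-boundary-points step. That step yields $f_0 = f\circ\Phi$ for a diffeomorphism $\Phi$, so the ``raw perturbation'' is $f\circ\Phi - f$, and replacing it by $\sigma(f\circ\Phi - f)$ gives the map $(1-\sigma)f + \sigma\,(f\circ\Phi)$, which is not a composition with a diffeomorphism. It is then no longer automatic that the result is an immersion (let alone that it has poles exactly at $E$), nor that it retains the length-stretching geometry near the exposed points $p_j$ which the rest of the construction relies on. The paper sidesteps this entirely by making $\Phi$ itself interpolate the identity at $E\cup\Lambda$, so $f\circ\Phi - f$ vanishes to the required order with no multiplicative factor needed. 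Second, ``summably small'' misdescribes the central quantitative mechanism. Each Riemann--Hilbert push is performed in a direction \emph{orthogonal} to the current gap $f(p_j)-g(p_j)$, so after pushing by $\eta$ the new boundary gap to the reference map $g$ is $\sqrt{\delta^2+\eta^2}$, not $\delta+\eta$. It is this Pythagorean estimate that lets one take $\eta_j \sim c/j$ with $\sum_j \eta_j = \infty$ (so the intrinsic distance eventually exceeds any prescribed $\tau$) while $\sum_j \eta_j^2 < \infty$ keeps the cumulative boundary deviation below $\varepsilon$. The individual increments $\|f_{j}-f_{j-1}\|_{0,\ol{\Rcal}}$ are \emph{not} $\ell^1$-summable; without the orthogonality and the $\ell^1$/$\ell^2$ dichotomy your scheme cannot simultaneously satisfy~(ii) and~(iii). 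Both points must be addressed before the proposal can be considered a proof.
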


Although Lemma \ref{lem:main} is stated for maps into $\c^2$, it holds true for maps into $\c^n$ for arbitrary integer $n\geq 2$. The proof is essentially the same, however, for simplicity of exposition, we will give the details only for $n=2$.
In order to prove the above result, we shall need the following technical lemma. 
The main feature of Lemma \ref{lem:dist} is enlarging the intrinsic diameter by an arbitrary positive number, while also approximating and interpolating the initial map. The proof follows the same line of ideas as those in the proof of \cite[Lemma 4.2]{alarcon2015every}, that is, pushing the images of boundary points a certain distance in a direction orthogonal to the position vector, solving a suitable Riemann-Hilbert problem and the gluing method, namely, a solution of the Cousin-I problem. However, since the given map has poles, this requires a more careful analysis. Secondly, our result provides interpolation of the initial map at the polar set $E$ and a given finite set in $\Rcal \setminus E$.

%
%
\begin{lemma}\label{lem:dist}
Let $\Rcal, \ f, \ E, \ \Lambda, \ p_0 \ \textrm{and } d$ be as in Lemma \ref{lem:main}.
Assume that $g \colon b\ol{\Rcal} \to \c^2$ is a smooth map and choose numbers $\delta>0, \ \mu>0$ such that
\begin{equation}\label{eq:f-g}
|f(p)-g(p)|<\delta \quad \textrm{for all } p\in b\ol{\Rcal} 
\end{equation}
and
\begin{equation}\label{eq:dist-f-bR}
0<\mu< \dist_{f}(p_0, b\ol{\Rcal}).
\end{equation}
Given a compact set $\Mcal \subset \Rcal$ with $E \cup \Lambda \subset \Mcal$ and positive numbers $\eta, \varepsilon >0$, there exists a map $\hat{f} \in \IE(\Rcal,\c^2)$ satisfying the following:
\begin{itemize}
\item[\rm (i)] $|\hat{f}(p)-g(p)| < \sqrt{\delta^2+\eta^2} \ \textrm{for all }p \in b\ol{\Rcal}$;

\smallskip
\item[\rm (ii)] $\dist_{\hat{f}}(p_0, b\ol{\Rcal}) > \mu + \eta$;

\smallskip
\item[\rm (iii)] the map $\hat{f}-f$ is holomorphic on $\Rcal$, vanishing to order $d$ at every point in $E \cup \Lambda$;

\smallskip
\item[\rm (iv)] $|\hat{f}-f|(p) < \varepsilon \ \textrm{for all } p\in \Mcal$.
\end{itemize}
\end{lemma}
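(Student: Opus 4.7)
\emph{Strategy.} The proof follows the Riemann--Hilbert plus Cousin-I gluing scheme of \cite[Lemma 4.2]{alarcon2015every}, adapted to deal with the pole set $E$ and the jet-interpolation requirement on $E\cup \Lambda$. The idea is to push the image of $b\ol\Rcal$ outward by a distance $\eta$ in directions nearly orthogonal to $f(b\ol\Rcal)$ (so as to add $\eta$ to the intrinsic diameter from inside), realize this push by solving a local Riemann--Hilbert problem on small conformal discs attached to the boundary and kept disjoint from $E\cup \Lambda$, and finally glue the local solutions back into a single meromorphic immersion on $\Rcal$ by a Cousin-I splitting, followed by a holomorphic correction that forces the required $d$-th order vanishing at $E\cup \Lambda$.

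\emph{Local push via Riemann--Hilbert.} Choose a partition of $b\ol\Rcal$ into pairwise disjoint closed arcs $\alpha_1,\ldots,\alpha_m$ so short that $f$ varies little along each of them. Across each $\alpha_i$ attach a small conformal disc $D_i\Subset \wh\Rcal$ with $\ol{D_i}\cap (E\cup \Lambda)=\emptyset$, and pick a unit vector $v_i\in \c^2$ approximately orthogonal to the tangent directions of $f\circ \alpha_i$. Prescribe boundary values on $\alpha_i$ of the form $f+\eta\,\phi_i v_i$, where $\phi_i$ is a bump function arranged so that, in view of \eqref{eq:f-g}, these values lie within $\sqrt{\delta^2+\eta^2}$ of $g$. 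A Riemann--Hilbert construction in the spirit of \cite{forstneric2009bordered} then yields a continuous map $h_i\colon \ol{D_i}\cap \ol\Rcal\to \c^2$, holomorphic on $D_i\cap \Rcal$, matching the prescribed data on $\alpha_i$, uniformly close to $f$ on $\ol{D_i}\cap \ol\Rcal$ outside a thin collar of $\alpha_i$, and such that any path in $D_i\cap \Rcal$ terminating on $\alpha_i$ picks up Euclidean length essentially $\eta$ under $h_i$. These local properties will drive (i) and (ii).

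\emph{Global assembly via Cousin-I and jet interpolation.} Let $U_0\subset \wh\Rcal$ be an open neighborhood of $\ol\Rcal\setminus \bigcup_i D_i$ containing $\Mcal$ and $E\cup \Lambda$, and let $U_i\Supset D_i$ be a small open enlargement; then $\{U_0,U_1,\dots,U_m\}$ is an open cover of a neighborhood of $\ol\Rcal$. The cocycle $c_i := h_i - f$ is holomorphic on $U_0\cap U_i$ (since $E\subset U_0\setminus \bigcup_i U_i$) and of small supremum norm. The standard Cousin-I splitting lemma on a bordered Riemann surface produces $b_0 \in \Ocal(U_0)$ and $b_i\in \Ocal(U_i)$ with $c_i = b_i - b_0$ and norms controlled by those of the $c_i$'s. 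Setting $\tilde f := f - b_0$ on $U_0$ and $\tilde f := h_i - b_i$ on $U_i$ defines a meromorphic immersion on $\Rcal$ with polar set exactly $E$ (the poles of $f$ are preserved on $U_0$ since $b_0$ is holomorphic, while everything is holomorphic on $U_i$ by disjointness from $E$), hence $\tilde f \in \IE(\Rcal,\c^2)$. To enforce (iii), apply standard interpolation on the open Riemann surface $\wh\Rcal$ to obtain $\beta \in \Ocal(\wh\Rcal)$ whose $(d-1)$-jet at every point of $E\cup \Lambda$ agrees with that of $b_0$, and with $\|\beta\|_{0,\ol\Rcal}$ small (which is possible since $b_0$ is uniformly small on a fixed neighborhood of $E\cup \Lambda$, so by Cauchy's inequalities its jets there are small as well). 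The map $\hat f := \tilde f + \beta$ then lies in $\IE(\Rcal,\c^2)$ and satisfies $\hat f - f = \beta - b_0$ globally on $\Rcal$, which is holomorphic and vanishes to order $d$ at $E\cup \Lambda$. Choosing the arc partition fine enough, the discs $D_i$ thin enough, and the splitting approximation sharp enough then delivers (i), (ii), and (iv) together.

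\emph{Main obstacle.} The most delicate point is reconciling the Cousin-I splitting with the jet-interpolation constraint at $E\cup \Lambda$ while keeping the total perturbation small enough to preserve (ii). The splitting lemma does not by itself respect interpolation, so the corrector $\beta$ is indispensable; making $\beta$ globally small forces tight quantitative control of the Cousin-I cocycle, which in turn forces a careful choice of the Riemann--Hilbert data (arcs, disc widths, push directions), and all of this bookkeeping must be carried out without ever disturbing the singular support $E$.
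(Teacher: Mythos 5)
Your proposal captures two of the three essential ingredients of the paper's argument---the Riemann--Hilbert push on boundary discs and the Cousin-I gluing---but omits the third, and this omission leaves a genuine gap in the proof of~(ii). The Riemann--Hilbert boundary amplitude must be continuous, hence it must taper to zero near the endpoints $p_j$ of each arc $\alpha_j$ (in the paper the amplitude $\mu_j$ is supported on an inner subarc $I_j\Subset\alpha_j$ and vanishes on $b\ol{D}_j\setminus I_j$). Consequently a divergent path exiting $\Rcal$ through a small neighbourhood $U_j$ of a division point $p_j$ picks up essentially no extra length from the Riemann--Hilbert step, and your claim that ``any path in $D_i\cap\Rcal$ terminating on $\alpha_i$ picks up Euclidean length essentially $\eta$'' is false exactly there. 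The paper closes this escape route by a separate mechanism (its Step~2): it smoothly extends $f$ over short Jordan arcs $\gamma_j$ attached at the $p_j$'s so that $f|_{\gamma_j}$ oscillates in the direction of $f(p_j)-g(p_j)$, and then pulls those arcs into the boundary with the exposed-boundary-points diffeomorphism $\Phi$ of \cite[Theorem 9.9.1]{Forstneric2017}, chosen tangent to the identity to high order at $E\cup\Lambda$ so as not to disturb the jet condition. It is the interplay of this oscillation near the $p_j$'s with the Riemann--Hilbert push on the inner subarcs that forces \emph{every} divergent path to gain length at least $\eta$. Your write-up contains no substitute for this step.

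A second, independent error concerns the push direction: you take $v_i$ ``approximately orthogonal to the tangent directions of $f\circ\alpha_i$,'' whereas the estimate $|f+\eta\phi_i v_i - g|<\sqrt{\delta^2+\eta^2}$ in~(i) requires Pythagoras applied to the decomposition $f-g$ versus the push, i.e.\ $v_j$ must be chosen orthogonal to $f(p_j)-g(p_j)$ (after perturbing $g$ so that $f\ne g$ on $b\ol{\Rcal}$), which is what the paper does. Orthogonality to the tangent of $f\circ\alpha_i$ does not yield the bound you assert, and it is also needed (via the projections $\pi_j$ onto $\span\{f(p_j)-g(p_j)\}$) to make the length estimates in~(ii) go through. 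On the positive side, your idea of enforcing the jet condition in~(iii) by a separate holomorphic corrector $\beta$ built from Cartan extension and Cauchy estimates is a legitimate alternative to the paper's strategy of building interpolation into each step (Mergelyan with jet control, $\Phi$ tangent to the identity, and a jet-preserving Cousin-I), and using a multi-chart cover rather than a two-set Cartan pair for the gluing is also fine; but neither of these compensates for the two gaps above.
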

%
%
%

%
%
\begin{proof}
Firstly, replace $\Mcal$ by a larger bordered domain in $\Bo(\Rcal)$ such that $\{p_0\} \cup E \cup \Lambda \subset \Mcal$ and without loss of generality, by \eqref{eq:dist-f-bR}, assume that
\begin{equation}\label{eq:dist-f-bM}
\dist_{f}(p_0, b\ol{\Mcal}) > \mu.
\end{equation}
Denote the points in $E$ by $e_i$, $i=1,\dots,\mgot$, so $E=\{e_i\}_{i=1}^\mgot$, and let $d_i$ be the degree of $f$ at its pole $e_i$. 
Further, assume that $\Rcal$ is a bordered domain in a bordered Riemann surface $\wh{\Rcal}$ such that $\Rcal \in \Bo(\wh{\Rcal})$.
By Mergelyan's theorem (see \cite[Theorems 1.12.7 and 1.12.14]{alarcon2021minimal}, \cite{fornaess2020holomorphic}) and general position, we may assume that $f \in \IE(\Rcal,\c^2)$ extends to $f \in \IE(\wh{\Rcal},\c^2)$.
If necessary, by general position, we may slightly modify the smooth map $g \colon b\ol{\Rcal} \to \c^2$ so that 
\begin{equation}\label{eq:f-not-g}
f(p) \neq g(p) \quad \textrm{for all } p \in b\ol{\Rcal}.
\end{equation}
For simplicity, let us assume that $b\ol{\Rcal}$ is connected; otherwise we apply the same procedure on all its connected components.
We shall construct $\hat{f}$ in four steps.

STEP 1 -- Splitting $b\ol{\Rcal}$.
Fix a positive number $\varepsilon_0$ which we shall specify later. There exist an integer $l \geq 3$ and compact connected arcs $\alpha_j \subset b\ol{\Rcal}, \ j \in \Z_l=\{0,1,\dots , l-1\}$, such that
\begin{itemize}
\item $\bigcup_{j \in \Z_l} \alpha_j = b\ol{\Rcal}$,

\smallskip
\item the arcs $\alpha_j$ and $\alpha_{j+1}$ have one common endpoint $p_j$, but are otherwise disjoint, $j\in \Z_l$,

\smallskip
\item $\alpha_j \cap \alpha_i=\emptyset \ \textrm{for } i \notin \{j-1,j,j+1\}$,

\smallskip
\item $|g(p)-g(q)|<\varepsilon_0 \ \textrm{for all } p,q \in \alpha_j$,

\smallskip
\item $|f(p)-g(q)|<\delta \ \textrm{for all } p,q \in \alpha_{j}$, and

\smallskip
\item $|f(p)-f(q)|<\varepsilon_0 \ \textrm{for all } p,q \in \alpha_j$.
\end{itemize}
The fifth property is implied by \eqref{eq:f-g}, whereas the rest are guaranteed by continuity of $f$ and $g$, provided that the arcs $\alpha_j$ are chosen sufficiently small.

For each $j\in \Z_l$ denote by $\pi_j$ the orthogonal projection
\begin{equation*}
\pi_j \colon \c^2 \to \span\{f(p_j)-g(p_j)\} \subset \c^2.
\end{equation*}
Note that the target is a complex line by condition \eqref{eq:f-not-g}.

STEP 2 -- Enlarging the distance from $p_0$ to $p_j$.
For each $j\in \Z_l$ choose an embedded Jordan arc $\gamma_j \subset \wh{\Rcal}$ such that $\gamma_j$ is transverse to $b\ol{\Rcal}$, $\gamma_j \cap \ol{\Rcal} = \{p_j\}$ and the family of arcs $\{\gamma_j\}_j$ is pairwise disjoint. By $q_j$ denote the other endpoint of $\gamma_j$.
Define 
\begin{equation*}
S=\ol{\Rcal} \cup (\cup_{j} \gamma_j) \subset \wh{\Rcal}, 
\end{equation*}
which is an admissible subset of $\wh{\Rcal}$ in the sense of \cite[Definition 1.12.9]{alarcon2021minimal}.
Let us smoothly extend $f$ to $S$ such that it satisfies
\begin{itemize}
\item[\rm (a1)] $|f(p)-g(q)|<\delta, \ |f(p)-f(q)|<\varepsilon_0 \ \textrm{for all pairs of points } (p,q)\in (\gamma_{j-1} \cup \alpha_j \cup \gamma_j)\times \alpha_j, \ j\in \Z_l$, and

\smallskip
\item[\rm (a2)] if $j\in \Z_l$ and $\{J_a\}_{a\in \Z_l}$ is a partition of $\gamma_j$ by Borel measurable sets, then $\sum_{a\in \Z_l} \length((\pi_{a}\circ f)(J_a))>2\eta$.
\end{itemize}
Observe that such an extension exists. More precisely, recalling properties of $\alpha_j$ and defining $f$ on $\gamma_j$ so that $f(\gamma_j)$ has small extrinsic diameter, it fulfills (a1). To ensure (a2), we also ask the image of $\gamma_j$ by $f$ to highly oscillate in the direction of $f(p_j)-g(p_j)$ for all $j$, taking into account the definition of $\pi_j$.

By Mergelyan's theorem on admissible sets \cite[Theorem 1.12.11 and Corollary 1.12.2]{alarcon2021minimal}, there exists a holomorphic map $\tilde{f} \colon \wh{\Rcal}\setminus E \to \c^2$, which extends to a meromorphic map on $\wh{\Rcal}$, smoothly approximates $f$ on $S$ and such that the map $\tilde{f}-f$ is holomorphic on $\wh{\Rcal}$, vanishing to order $d$ at every point in $E\cup \Lambda$. 
Further, we can assume that $\tilde{f} \colon \wh{\Rcal}\setminus E \to \c^2$ is a holomorphic immersion meeting the aforementioned conditions. 
Indeed, $\tilde{f}$ has poles at all points of $E$, so it is an immersion on a punctured open neighbourhood of each of these points. 
On the other side, a standard general position argument guarantees that $\tilde{f}$ is immersive on the complement in $\wh{\Rcal}$ of a union of open neighbourhoods of poles. 
These observations together imply that, up to a small deformation, $\tilde{f}$ is a holomorphic immersion on $\wh{\Rcal} \setminus E$.
Without loss of generality, let us replace $\tilde{f}$ by $f$.

Next, do the following for all $j\in \Z_l$. Choose small open neighbourhoods $V \subset \wh{\Rcal}$ of $S$, $W_{j}' \Subset W_j \Subset V\setminus \Mcal$ of $p_j$ and $V_j \Subset V\setminus \Mcal$ of $\gamma_j$ satisfying:
\begin{itemize}
\item[\rm (a3)] $V_j \cap \ol{\Rcal} \Subset W_{j}' \Subset W_j \Subset V\setminus \Mcal$;

\smallskip
\item[\rm (a4)] $|f(p)-g(q)|<\delta \ \textrm{for all } (p,q)\in (W_{j-1} \cup V_{j-1} \cup \alpha_j \cup V_j \cup W_j) \times \alpha_j$;

\smallskip
\item[\rm (a5)] if $\gamma_{j}' \subset W_j \cup V_j$ is an arc with one endpoint in $W_j$, the other one being $q_j$, and $\{J_a\}_{a\in \Z_l}$ is a partition of $\gamma_{j}'$ by Borel measurable sets, then $\sum_{a\in \Z_l} \length((\pi_a\circ f)(J_a))>2\eta$;

\smallskip
\item[\rm (a6)] $\ol{W}_j \cup \ol{V}_j$ are pairwise disjoint compact sets.
\end{itemize}
Properties (a4) and (a5) correspond to (a1) and (a2), respectively.
By \cite[Theorem 9.9.1]{Forstneric2017}, for every $N \in \N$ there exists a smooth diffeomorphism $\Phi \colon \ol{\Rcal} \to \Phi(\ol{\Rcal})$ onto a smoothly bounded domain $\Phi(\ol{\Rcal}) \subset V$ such that
\begin{itemize}
\item[\rm (a7)] $\Phi \colon \Rcal \to \Phi(\Rcal)$ is biholomorphic,

\smallskip
\item[\rm (a8)] $\Phi$ is as close as desired to the identity map in $\Ccal^{\infty}(\ol{\Rcal}\setminus \bigcup_{j\in \Z_l} W_{j}')$,

\smallskip
\item[\rm (a9)] $\Phi$ is tangent to the identity map to order $N$ at each point in $E \cup \Lambda$, and

\smallskip
\item[\rm (a10)] $\Phi(p_j)=q_j$ and $\Phi(\ol{\Rcal} \cap W_{j}') \subset W_j \cup V_j \ \textrm{for all } j\in \Z_l$.
\end{itemize}
Additionally, we may assume by (a8) and up to replacing $\gamma_j$ by a nearby arc that $\gamma_j \setminus \{q_j\} \subset \Phi(\Rcal \setminus \ol{\Mcal})$ holds for all $j\in \Z_l$.
Define a map
\begin{equation}\label{eq:f_0}
f_0 = f \circ \Phi \colon \ol{\Rcal} \to \c^2.
\end{equation}
If we choose $N = \max\{ \max \{d_i \colon i=1,\dots,\mgot\}, d\}$, then the map $f_0 \colon \ol{\Rcal}\setminus E \to \c^2$ is holomorphic, extending to a meromorphic immersion on $\Rcal$, and the map $f_0-f$ is holomorphic on $\Rcal$, vanishing to order $d$ at the points in $E\cup \Lambda$. Fix some $\varepsilon_1>0$ to be specified later. It turns out that
\begin{itemize}
\item $||f_0-f||_{0,\ol{\Mcal}} < \varepsilon_1$ (by (a8)),

\smallskip
\item $|f_0(p)-g(q)|<\delta, \ |f_0(p)-f(q)|<\varepsilon_0 \ \textrm{for all } p,q\in \alpha_j$ (by (a4), (a8), (a10)), and

\smallskip
\item there exists an open neighbourhood $U_j \subset \ol{\Rcal}$ of $p_j$, $U_j \cap \ol{\Mcal} = \emptyset$, such that if $\gamma \subset \ol{\Rcal}$ is an arc connecting a point in $\ol{\Mcal}$ with a point in $\ol{U}_j$ and $\{J_a\}_{a\in \Z_l}$ is a partition of $\gamma$ by Borel measurable sets, then 
	\begin{equation}\label{eq:length-f0-Ja}
	\sum_{a\in \Z_l} \length \left((\pi_a\circ f_0)(J_a)\right) > \eta.
	\end{equation}
(Recall properties (a3), (a5), (a8) and (a10).)
We may assume that $\ol{U}_j$ are simply connected, smoothly bounded and pairwise disjoint sets.
\end{itemize}

STEP 3 -- Stretching from the arcs $\alpha_j$.
Pick a number $\varepsilon_2>0$ which we shall specify later and fix a smooth retraction $r \colon \ol{\Rcal} \setminus \ol{\Mcal} \to b\ol{\Rcal}$.
For each $j\in \Z_l$ choose smoothly bounded pairwise disjoint closed discs $\ol{D}_j \subset \ol{\Rcal} \setminus \ol{\Mcal}$ such that the following conditions hold.
\begin{itemize}
\item[\rm (b1)] $|f_0(p)-g(q)|<\delta \ \textrm{for all } (p,q)\in \ol{D}_j \times \alpha_j$.

\smallskip
\item[\rm (b2)] $\ol{D}_j \cap b\ol{\Rcal}$ is a compact connected Jordan arc in $\alpha_j \setminus \{p_{j-1},p_j\}$ with endpoints in $U_{j-1}$ and $U_j$, respectively.

\smallskip
\item[\rm (b3)] $r(\ol{D}_j) \subset \alpha_j \setminus \{p_{j-1},p_j\}$ and $|(f_0 \circ r)(p)-f_0(p)|<\varepsilon_2 \ \textrm{for all } p\in \ol{D}_{j}$.
\end{itemize}
Observe that such discs exist by continuity of $f_0$ and $r$.
Let $\beta_j \Subset I_j \Subset \ol{D}_j \cap \alpha_j$ be compact connected Jordan arcs with endpoints in $U_{j-1}$ and $U_j$, respectively. Let $C_j \subset \ol{D}_j$ be an open neighbourhood of $I_j$ ($\ol{C}_j \subset \ol{D}_j$ is a disc) such that $b\ol{C}_j \cap b\ol{D}_j \subset \alpha_j$ and $b\ol{C}_j \cap \alpha_j$ lies in the relative interior of $b\ol{D}_j \cap \alpha_j$ (see Figure \ref{fig:sets}).

\begin{figure}[h!]
\begin{center}
\includegraphics[scale=0.3]{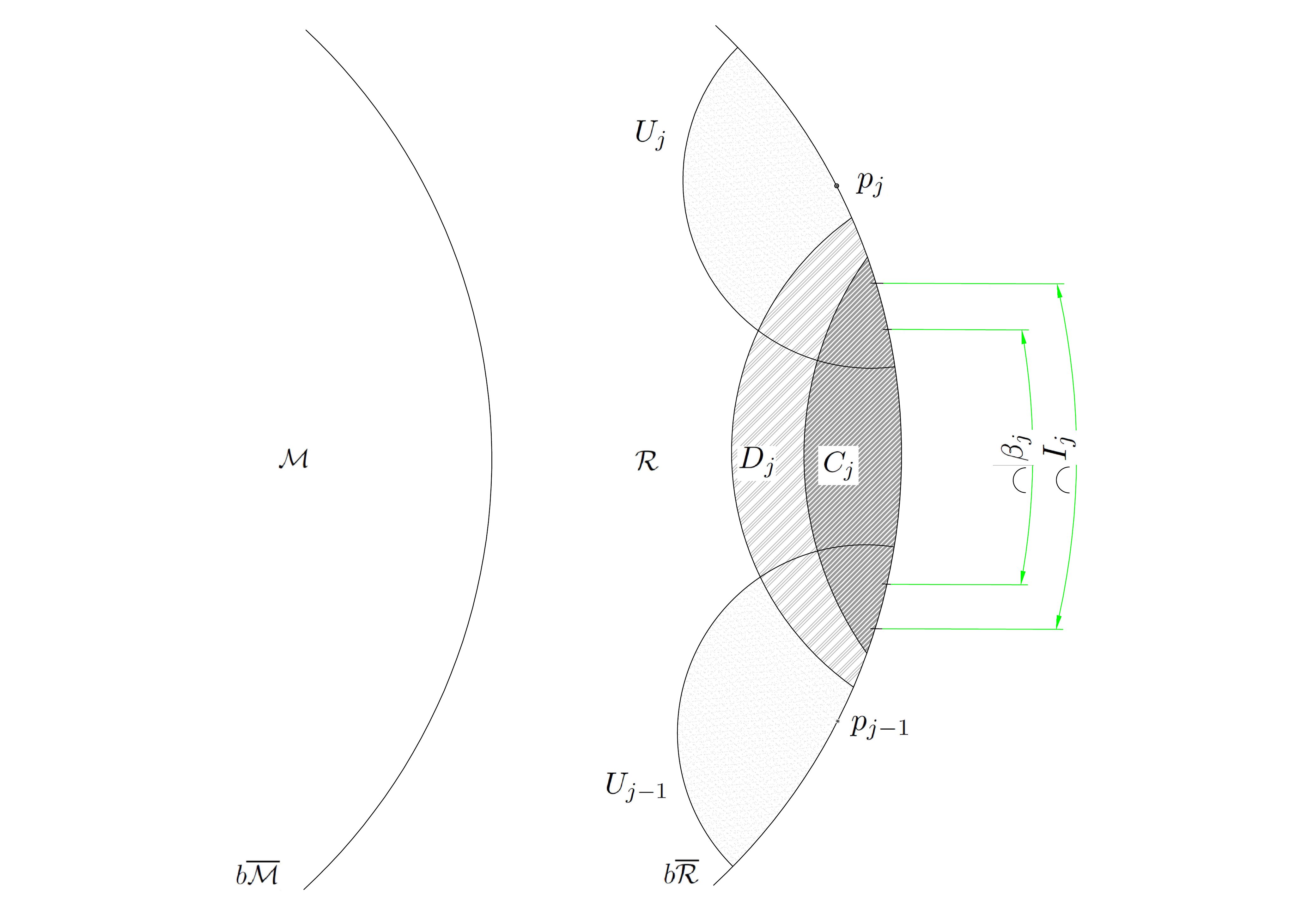}
\caption{Sets $U_{j-1}, U_j, C_j, D_j$ in $\ol{\Rcal} \setminus \ol{\Mcal}$ and arcs $\beta_j, I_j$ on $b\ol{\Rcal}$ constructed in step $3$.}
\label{fig:sets}
\end{center}
\end{figure}

We shall now use a solution of a suitable Riemann-Hilbert problem.
For each $j$, choose a unit vector $v_j \in \c^2$ perpendicular to the vector $f(p_j)-g(p_j) \neq 0$, and a continuous function $\mu_j \colon b\ol{D}_j \to [0,\eta]$ such that 
\begin{equation}\label{eq:mu_j}
	\begin{cases}
	\mu_j = \eta & \textrm{on } \beta_j, \\
	\mu_j = 0 & \text{on } b\ol{D}_j\setminus I_j.
	\end{cases}
\end{equation}
Next, take a continuous map $g_j \colon b\ol{D}_j \times \cd \to \c^2$ satisfying
\begin{itemize}
\item [\rm (i)] $g_j(p,\cdot) \in \Acal(\ol{\d})^2$, and

\smallskip
\item [\rm (ii)] $g_j(p,\xi)=f_0(p)+\mu_j(p) \cdot \xi \cdot v_j$ for all $p\in b\ol{D}_j, \ \xi \in \ol{\d}$.
\end{itemize}
Note that there exists a conformal diffeomorphism mapping $\ol{D}_j$ onto $\cd$.
In this situation, \cite[Lemma 6.1.1]{alarcon2021minimal} furnishes a map $h_j \in \Acal(\ol{D}_j)^2$, enjoying the following properties:
\begin{itemize}
\item[\rm (c1)] $\dist(h_j(p),g_j(p,\t)) < \varepsilon_2 \ \textrm{for all } p\in b\ol{D}_j$;

\smallskip
\item[\rm (c2)] $\dist(h_j(p),g_j(r(p),\cd)) < \varepsilon_2 \ \textrm{for all } p\in C_j$;

\smallskip
\item[\rm (c3)] $|h_j-f_0| < \varepsilon_2 \ \textrm{on } \ol{D}_j\setminus C_j$;

\smallskip
\item[\rm (c4)] $|\pi_j \circ h_j - \pi_j \circ f_0| < 2\varepsilon_2 \ \textrm{on } \ol{C}_j$.
\end{itemize}
Observe that in order to obtain (c4), we take into account assertions (ii), (b3), (c2), (c3) and that $v_j$ is orthogonal to $f(p_j)-g(p_j)$.

STEP 4 -- Gluing $f_0$ and $h_j$.
Define sets
\begin{equation*}
A = \Rcal \setminus \bigcup_{j\in \Z_l} \ol{C}_j \subset \Rcal \quad \textrm{and} \quad B = \bigcup_{j\in \Z_l} D_j \subset \Rcal.
\end{equation*}
Let $\ol{A}$ and $\ol{B}$ be their closures. Note that they form a Cartan pair for $\ol{\Rcal}$ in the sense of \cite[Definition 5.7.1]{Forstneric2017} and $(E\cup \Lambda) \cap (\ol{A \cap B})=\emptyset$.

We shall use a solution of the Cousin-I problem to obtain a global meromorphic map. 
At this step, we consider $f_0$ as a holomorphic map $\ol{\Rcal} \to \cp^2$, keeping in mind its properties.
Fix a number $\varepsilon_3>0$ to be specified later. By property (c3) and assuming that $\varepsilon_2>0$ is small enough, \cite[Proposition 5.8.1 and Lemma 5.8.2]{Forstneric2017} furnish a holomorphic map $\hat{f} \colon \ol{\Rcal} \to \cp^2$ which is $\varepsilon_3$-close to $f_0$ in $\Ccal^0(\ol{A})$ and $\varepsilon_3$-close to $h_j$ in $\Ccal^0(\ol{D}_j)$ for all $j\in \Z_l$, see also \cite[Corollary 2]{alarcon2013every}. 
Furthermore, we can choose $\hat{f}$ so that $\hat{f}-f_0$ vanishes to order $d$ on $E\cup \Lambda$, by addition to the cited lemma. (Note that the same proofs to \cite[Proposition 5.8.1]{Forstneric2017}, \cite[Lemma 5.8.2]{Forstneric2017} apply if one replaces $\c^N$ by $\cp^N$ in the target spaces of the corresponding maps, as is the case in our setting. So use of these results is justified.)
Using the latter and since $f_0$ is holomorphic outside of $E$, Hurwitz's theorem gives that $\hat{f} \colon \Rcal \setminus E \to \c^2$ is holomorphic, extending to a meromorphic map from $\Rcal$ to $\c^2$ having poles at all points in $E$, provided that $\varepsilon_3>0$ is sufficiently small.
By Mergelyan approximation, we may assume that $\hat{f}$ is a holomorphic map from a neighbourhood of $\ol{\Rcal} \setminus E$ to $\c^2$, which extends meromorphically to a neighbourhood of $\ol{\Rcal}$ with the polar set $E$. Applying a general position argument, we can assume that $\hat{f}$ is a meromorphic immersion on a neighbourhood of $\ol{\Rcal}$ with the polar set $E$.
(See \cite[Lemma 2, Corollary 2, Sect. 3.4]{alarcon2013every} for an analogous result in the case of holomorphic maps $\ol{\Rcal} \to \c^2$.)

STEP 5 -- Checking the properties of $\hat{f}$.

Fix $p\in b\ol{\Rcal}$. We claim that $|\hat{f}(p)-g(p)|<\sqrt{\delta^2+\eta^2}$. Let us consider two cases.
	
If $p\in b\ol{\Rcal}\cap \ol{A}$, then
\begin{equation*}
|\hat{f}(p)-g(p)| \leq |\hat{f}(p)-f_0(p)| + |f_0(p)-g(p)| < \varepsilon_3 + \delta < \sqrt{\delta^2+\eta^2}
\end{equation*}
for a suitable $\varepsilon_3>0$. (Note that the second inequality is obtained by the approximation in step 4 and a property of $f_0$.)

If $p\in b\ol{\Rcal}\cap \ol{B}$, there exists some $j \in \Z_l$ such that $p\in b\ol{\Rcal}\cap \ol{C}_j$. Pick a point $\xi \in b\ol{D}_j$. Then
\begin{equation*}
|\hat{f}(p)-g(p)| \leq |\hat{f}(p)-h_j(p)| + |h_j(p)-g_j(p,\xi)|+|g_j(p,\xi)-g(p)|.
\end{equation*}
We estimate the last summand as follows. Start with the point $f(p)$. 
Then $f_0(p)$ satisfies $|f(p)-f_0(p)|<\varepsilon_0$ and $f(p_j)$ satisfies $|f(p_j)-f_0(p)|<\varepsilon_0$ (recall the second property stated after the definition of $f_0$). 
Construct a complex line orthogonal to the vector $f(p_j)-g(p_j)$ and a parallel to it through the point $f_0(p)$. Then $g_j(p,\xi)$ lies on the latter line with $|f_0(p)-g_j(p,\xi)|=\mu_j(p)$, by condition (ii) in the definition of $g_j$. Let the point $T$ lie on the line we constructed first (passing through $f(p_j)$, orthogonal to $f(p_j)-g(p_j)$) and such that it satisfies $|T-f(p_j)|=|f_0(p)-g_j(p,\xi)|, \ |T-g_j(p,\xi)|=|f(p_j)-f_0(p)|$. By Pythagora's theorem, it follows that $|T-g(p_j)|=\sqrt{\mu_j^2(p)+|f(p_j)-g(p_j)|^2}$.
Hence 
\begin{eqnarray*}
	|g_j(p,\xi)-g(p)| & = & |(f_0(p)+\mu_j(p) \cdot \xi \cdot v_j) - g(p)| \\
	& \leq & |g_j(p,\xi)-T| + |T-g(p_j)| + |g(p_j)-g(p)| \\
	& \leq & |f(p_j)-f_0(p)| + \sqrt{\mu_j^2(p)+|f(p_j)-g(p_j)|^2} + |g(p_j)-g(p)| \\
	& < & \varepsilon_0 + \sqrt{\mu_j^2(p)+|f(p_j)-g(p_j)|^2} + \varepsilon_0.
\end{eqnarray*}
Note that to obtain this estimate, one applies the definition of $g_j$, triangular inequality, properties in the construction stated above and the fourth property in step 1.
Taking into account the approximation in step 4, (c1) and \eqref{eq:f-g} we conclude that
\begin{equation*}
|\hat{f}(p)-g(p)| < \varepsilon_3+\varepsilon_2+2\varepsilon_0 + \sqrt{\mu_j^2(p)+|f(p_j)-g(p_j)|^2} < \sqrt{\delta^2+\eta^2},
\end{equation*}
provided that $\varepsilon_0, \varepsilon_2, \varepsilon_3>0$ are sufficiently small.

We now prove $\dist_{\hat{f}}(p_0,b\ol{\Rcal})>\mu+\eta$.
Since $\ol{\Mcal} \subset \ol{A}$, it holds
\begin{equation*}
|\hat{f}-f| \leq |\hat{f}-f_0|+|f_0-f| < \varepsilon_3+\varepsilon_1 \quad \textrm{on } \ol{\Mcal}.
\end{equation*}
Moreover, \eqref{eq:dist-f-bM} says $\dist_f(p_0,b\ol{\Mcal}) > \mu$. Combining both observations, we infer that $\dist_{\hat{f}}(p_0,b\ol{\Mcal})>\mu$ for sufficiently small numbers $\varepsilon_1, \varepsilon_3>0$, so it suffices to prove that the distance $\dist_{\hat{f}}(b\ol{\Mcal},b\ol{\Rcal})$ is larger than a number as close to $\eta$ as desired.
Choose an arc $\gamma \subset \ol{\Rcal} \setminus \Mcal$ with the initial point on $b\ol{\Mcal}$ and the final point on $b\ol{\Rcal}$, but otherwise disjoint from $b\ol{\Mcal}$ and $b\ol{\Rcal}$. We shall estimate $\length(\hat{f}(\gamma))$. We look at two cases.

Firstly, assume that there exist some $j \in \Z_l$ such that $\gamma \cap U_j \neq \emptyset$ holds. Consider a subpath $\gamma' \subset \gamma$ with one endpoint on $b\ol{\Mcal}$ and the other endpoint in $\ol{U}_j$. Then
\begin{eqnarray*}
	\length(\hat{f}(\gamma')) & \geq & \length(\hat{f}(\gamma' \cap \ol{A})) + \length(\hat{f}(\gamma' \cap \ol{C}_j)) + \length(\hat{f}(\gamma' \cap \ol{C}_{j+1})) \\
	& \approx & \length(f_0(\gamma' \cap \ol{A})) + \length(h_j(\gamma' \cap \ol{C}_j)) \\
	& + & \length(h_{j+1}(\gamma' \cap \ol{C}_{j+1})) \\
	& \geq & \length((\pi_j \circ f_0)(\gamma' \cap \ol{A})) + \length((\pi_j \circ h_j)(\gamma' \cap \ol{C_j})) \\
	& + & \length((\pi_{j+1} \circ h_{j+1})(\gamma' \cap \ol{C}_{j+1})) \\
	& \stackrel{\text{\rm (c5)}}{\approx} & \length((\pi_j \circ f_0)(\gamma' \cap \ol{A})) + \length((\pi_j \circ f_0)(\gamma' \cap \ol{C}_j)) \\
	& + & \length((\pi_{j+1} \circ f_0)(\gamma' \cap \ol{C}_{j+1})) \\
	& \stackrel{\text{\eqref{eq:length-f0-Ja}}}{>} & \eta,
\end{eqnarray*}
hence the original path $\gamma$ satisfies $\length(\hat{f}(\gamma))>\eta$, as required. (Recall step 4 to obtain approximation of $\hat{f}$ by $f_0$ and $h_j$, respectively, in the second inequality.)

If, on the contrary, $\gamma \cap U_j = \emptyset$ for all $j$, then there exists a subarc $\gamma_1 \subset \gamma, \ \gamma_1 \subset \ol{D_j \setminus (U_{j-1} \cup U_j)}$, between points $p \in \ol{D_j \setminus (U_{j-1} \cup U_j \cup C_j)}$ and $q \in \beta_j$. We estimate as follows:
\begin{eqnarray}\label{eq:length-hatf}
	\length(\hat{f}(\gamma)) & \geq & \length(\hat{f}(\gamma_1)) \geq |\hat{f}(p)-\hat{f}(q)| \\ \nonumber
	& \geq & |\hat{f}(q)-f_0(q)| - |\hat{f}(p)-f_0(p)| \\ \nonumber
	& - & |f_0(p) - f_0\circ r(p)| - |f_0\circ r(p) - f_0(q)|.
\end{eqnarray}
Pick some $\xi \in \t$. Then
\begin{eqnarray*}
	|\hat{f}(q)-f_0(q)| & \geq & |g_j(q,\xi)-f_0(q)| - |h_j(q)-g_j(q,\xi)| - |\hat{f}(q)-h_j(q)| \\
	& = & \mu_j(q) \cdot |\xi| - |h_j(q)-g_j(q,\xi)| - |\hat{f}(q)-h_j(q)| \\
	& > & \eta - \varepsilon_2 - \varepsilon_3
\end{eqnarray*}
holds by \eqref{eq:mu_j}, the definition of $g_j$ and (c1). Using the above estimate and (b3) in the inequality \eqref{eq:length-hatf}, we get
\begin{equation*}
\length(\hat{f}(\gamma)) > (\eta -\varepsilon_2-\varepsilon_3)-\varepsilon_3-2\varepsilon_2 > \eta-3\varepsilon_2-2\varepsilon_3.
\end{equation*}
Hence $\dist_{\hat{f}}(b\ol{\Mcal},b\ol{\Rcal}) > \eta-3\varepsilon_2-2\varepsilon_3$ and $\dist_{\hat{f}}(p_0,b\ol{\Rcal}) > \mu + \eta$ is guaranteed by sufficiently small choice of numbers $\varepsilon_2, \varepsilon_3 > 0$.

To prove $|\hat{f}-f|(p)<\varepsilon \ \textrm{for all } p\in \ol{\Mcal}$, observe that
	\begin{equation*}
	|\hat{f}-f|(p) \leq |\hat{f}-f_0|(p) + |f_0-f|(p) < \varepsilon_3 + \varepsilon_1
	\end{equation*}
holds by step 4 and properties of $f_0$. Provided that $\varepsilon_1, \varepsilon_3>0$ are small enough, we get the required estimate.

Note that the map $\hat{f}-f$ is holomorphic on $\Rcal$ and it vanishes to order $d$ at every point in $E\cup \Lambda$ by the construction of $\hat{f}$ and approximations at every step of the procedure. More precisely, the difference $\hat{f}-f$ satisfies the required by step 4, whereas the same claim for $f_0-f$ is provided by the definition of $f_0$ in \eqref{eq:f_0} and step 2.
Recall also that $E \cup \Lambda \subset \Mcal$ and $\ol{D}_j \subset \ol{\Rcal}\setminus \ol{\Mcal}$ for all $j$, thus interpolation on the set $E \cup \Lambda$ from step 2 is not affected in step 3.
This concludes the proof.
\end{proof}

By an inductive application of Lemma \ref{lem:dist} and using a standard general position argument in the final step, we obtain Lemma \ref{lem:main}. The proof goes as follows.

%
%
\begin{proof}[Proof of Lemma \ref{lem:main}]
Choose numbers $\eta_0$ and $\delta_0$ such that $0<\eta_0<\dist_{f}(p_0,b\ol{\Rcal})$ and $0<\delta_0<\varepsilon$.
Set 
\begin{equation*}\label{eq:c}
c = \sqrt{\frac{6(\varepsilon^2-\delta_0^2)}{\pi^2}} >0
\end{equation*}
and for $j \in \N$ recursively define sequences
\[ \eta_{j} = \eta_{j-1} + \frac{c}{j}>0, \quad \delta_{j} = \sqrt{\delta_{j-1}^2 + \frac{c^2}{j^2}}>0. \]
It follows that
\begin{equation}\label{eq:nj}
\lim_{j \to \infty} \eta_{j} = \eta_0 + \sum_{j=1}^{\infty}\frac{c}{j} = +\infty
\end{equation}
and
\begin{equation}\label{eq:deltaj}
\lim_{j \to \infty} \delta_{j}^2 = \delta_0^2 + \sum_{j=1}^{\infty}\frac{c^2}{j^2} = \delta_0^2 + \frac{c^2 \pi^2}{6} = \varepsilon^2, \ \textrm{hence } \lim_{j \to \infty} \delta_{j} = \varepsilon.
\end{equation}
We shall inductively construct a sequence of maps $f_{j} \in \IE(\Rcal,\c^2)$ meeting the following conditions  for every $j \in \z_{\geq0}$:
\begin{itemize}
\item[\rm (i$_j$)] $|f_{j}(p)-f(p)|<\delta_j$ for all $p \in b\ol{\Rcal}$;

\smallskip
\item[\rm (ii$_j$)] $\dist_{f_j}(p_0, b\ol{\Rcal}) > \eta_j$;

\smallskip
\item[\rm (iii$_j$)] $f_j-f$ is holomorphic on $\Rcal$ and vanishes to order $d$ at every point in $E \cup \Lambda$. 
\end{itemize}
Set $f_0 =f$ for the basis of induction. Now assume we have already found maps $f_k$ for all $k\leq j$ for some $j\in \Z_{\geq 0}$. Applying Lemma \ref{lem:dist} to the data 
\[ \Rcal, \ f=f_j, \ E, \ \Lambda, \ p_0, \ d, \ g=f|_{b\ol{\Rcal}}, \ \delta=\delta_j, \ \mu=\eta_j, \ \eta=\frac{c}{j+1}, \ \varepsilon, \]
we get a map $\hat{f} = f_{j+1} \in \IE(\Rcal,\c^2)$. 
Clearly, it satisfies:
\begin{itemize}
\item[\rm (i$_{j+1}$)] $|f_{j+1}(p)-f(p)|< \sqrt{\delta_j^2 + \frac{c^2}{(j+1)^2}} = \delta_{j+1}$ for all $p \in b\ol{\Rcal}$;

\smallskip
\item[\rm (ii$_{j+1}$)] $\dist_{f_{j+1}}(p_0, b\ol{\Rcal}) > \eta_j + \frac{c}{j+1} = \eta_{j+1}$;

\smallskip
\item[\rm (iii$_{j+1}$)] $f_{j+1}-f$ is holomorphic on $\Rcal$ and vanishes to order $d$ at every point in $E \cup \Lambda$. 
\end{itemize}
This closes the induction.
Properties $(\textrm{i}_j)$ and \eqref{eq:deltaj} imply $|f_j-f|(p)<\varepsilon \ \textrm{for all } p\in b\ol{\Rcal} \ \textrm{and} \ j\in \N$, and by the maximum principle, the same inequality holds for all $p \in \ol{\Rcal}$.
Moreover, $\textrm{(ii}_j)$ and \eqref{eq:nj} guarantee that $\dist_{f_j}(p_0, b\ol{\Rcal})>\eta_j>\tau$ for large enough $j\in \N$.
Choosing $j\in \N$ sufficiently large such that the above is satisfied and applying a general position argument to $f_{j}$ in order to ensure condition (iv) in the statement of the lemma (note that $b\ol{\Rcal}$ is of real dimension $1$ while $\c^2$ is of real dimension $4$), we obtain a map $\hat{f}$ which meets the conclusions of Lemma \ref{lem:main}.
\end{proof}
%
%
%


\section{Proof of Theorem \ref{th:main}}\label{sec:proof-main}

\noindent
For simplicity of exposition, we shall assume $n=2$; the general case follows from the same argument. At the end of the proof we explain how to ensure embeddedness of $\hat{h}$ on $\ol{\Rcal}\setminus E$ for $n\geq 3$ (see Remark \ref{rem:n>2}).

Let us assume that $\Rcal$ is a bordered domain in an open Riemann surface $\wh{\Rcal}$ such that $\Rcal \in \Bo(\wh{\Rcal})$.
Choose bordered domains $\Rcal_0, S_0 \subset \Rcal$, $\Rcal_0 \in \Bo(\Rcal)$, with $E \cup \Lambda \subset \Rcal_0$, $E \subset S_0, \ S_0 \cap \Lambda = \emptyset, \ \ol{S}_0 \subset \Rcal_0$ (i.e., $S_0$ is a pairwise disjoint union of discs centered at the points in $E$), point $p_0 \in \Rcal_0 \setminus (S_0 \cup \Lambda)$ and positive integer $\tau_0 \in \N$.
Let $d \colon \wh{\Rcal} \times \wh{\Rcal} \to \r$ be a distance function on the Riemannian surface $(\wh{\Rcal},g)$, where $g$ is a Riemannian metric on $\wh{\Rcal}$.
Set $\varepsilon_0 = \varepsilon$ and $h_0 = h$. By Mergelyan's theorem and general position, we may assume that $h_0 \in \IE(\Rcal,\c^2)$ (see Definition \ref{def:IE}) and by a standard general position argument, assume that the restriction $h_0|_{b\ol{\Rcal}}$ is an embedding.
We will inductively construct a sequence $\left\{ \Pi_j = \{\Rcal_j, S_j, h_j, \varepsilon_j, \tau_j \} \right\}_{j=1}^{\infty}$ of bordered domains $\Rcal_j, S_j \subset \Rcal$ such that $\Rcal_j \in \Bo(\Rcal)$, $\ol{S}_j \subset \Rcal_j$, maps $h_j \in \IE(\Rcal,\c^2)$, positive numbers $\varepsilon_j>0$ and integers $\tau_j>j$, satisfying the following properties for all $j \in \n$.
\begin{itemize}
\item[\rm (1$_j$)] 
	$\ol{\Rcal}_{j-1} \subset \Rcal_j, \ \ol{S}_j \subset S_{j-1}$ and
	\begin{equation*}
	\max \left\{\max_{p\in b\ol{\Rcal}_j} \{d(p,b\ol{\Rcal})\}, \max_{i=1, \dots, \mgot} \{d(e_i,bS_{j}^{i})\} \right\} < \frac{1}{j},
	\end{equation*}
	where $E=\{e_i\}_{i=1}^{\mgot} \ (\mgot \in \n)$ and $S_{j}^{i}$ is a disc centered at $e_i$ such that $\bigcup_{i=1}^{\mgot}S_{j}^{i}=S_j$ is a pairwise disjoint union.

\smallskip
\item[\rm (2$_j$)] 
	$\max \left\{||h_j-h_{j-1}||_{0,\ol{\Rcal}}, ||dh_j-dh_{j-1}||_{0,\ol{\Rcal}_{j-1}} \right\} < \varepsilon_j$.

\smallskip
\item[\rm (3$_j$)] 
	$\dist_{h_j}(p_0,b\ol{\Rcal}_k)>k$ for all $k\in \{0,\dots, j\}$.

\smallskip
\item[\rm (4$_j$)] 
	$h-h_j$ is holomorphic on $\Rcal_j$, vanishing to any given finite order on $E \cup \Lambda$.

\smallskip
\item[\rm (5$_j$)] 
	$h_j|_{b\ol{\Rcal}}$ is an embedding.

\smallskip
\item[\rm (6$_j$)] 
	$0 < \varepsilon_{j} < \min \left\{ \frac{\varepsilon_{j-1}}{2}, \frac{1}{\tau_{j}^{j}}, \frac{1}{2j^2} \inf\left\{|h_{j-1}(p)-h_{j-1}(q)| \colon p,q\in b\ol{\Rcal}, d(p,q)>\frac{1}{j} \right\} \right\}$,
	and such that if $g \colon \Rcal \setminus E \to \c^2$ is holomorphic and $|g-h_j|<\varepsilon_j$ on $\ol{\Rcal}_j \setminus S_j$, then $g$ is an immersion on $\Rcal_{j-1} \setminus S_{j-1}$ and $\dist_g(p_0,b\ol{\Rcal}_{k-1})>k-1$ for all $k\in \{0,\dots,j-1\}$.

\smallskip
\item[\rm (7$_j$)]
	 $\tau_j>\tau_{j-1}$, and there is a set $A_{j} \subset h_j(b\ol{\Rcal})$ of cardinality $\tau_{j}^{j+1}$ so that
	\begin{equation*}
	\max \{ \dist(p,A_{j}) \colon p\in h_j(b\ol{\Rcal}) \} < \frac{1}{\tau_{j}^{j}}.
	\end{equation*}
\end{itemize}
Firstly, $\Pi_0$ satisfies conditions $(3_0)$--$(5_0)$ and the second part of $(7_0)$ for large enough $\tau_0$, whereas the others are void.
Now assume we have already constructed sequences $\Pi_0, \dots, \Pi_{j-1}$ for some $j\in \n$. Since $h_k, \ k\in \{0,\dots,j-1\}$, are embeddings on $b\ol{\Rcal}$, we can find a number $\varepsilon_j>0$ satisfying the first part of $(6_j)$. Lemma \ref{lem:main} furnishes a map $h_j \in \IE(\Rcal,\c^2)$ such that
\begin{itemize}
\item $||h_j-h_{j-1}||_{0,\ol{\Rcal}}<\varepsilon_j$, which implies approximation in $\Ccal^1(\Rcal_{j-1})$ topology, i.e., $(2_j)$;

\smallskip
\item $\dist_{h_j}(p_0,b\ol{\Rcal})$ is arbitrarily big;

\smallskip
\item $h_j-h_{j-1}$ is holomorphic on $\Rcal$, vanishing to any given finite order on $E \cup \Lambda$, hence by $(4_{j-1})$, the same holds for the map $h-h_j$, i.e., $(4_j)$;

\smallskip 
\item $h_j|_{b\ol{\Rcal}}$ is an embedding, i.e., $(5_j)$.
\end{itemize}
Letting $\tau_j \in \n$ so that
\begin{equation*}
\tau_j > \tau_{j-1} + \length(h_j(b\ol{\Rcal}))
\end{equation*}
holds, it satisfies property $(7_j)$.
Choosing domains $\Rcal_j$ sufficiently big and $S_j$ sufficiently small, we fulfill conditions $(1_j), \ (3_j) \ \textrm{and } (6_j)$. (Note that in order to obtain $(6_j)$, we require $\varepsilon_j>0$ to be small enough, and then apply Cauchy estimates.)
This closes the induction and finishes the construction of $\{\Pi_j\}_j$. Moreover, we can ensure that $\Rcal=\bigcup_{j=0}^{\infty}\Rcal_j$ and $E=\bigcap_{j=0}^{\infty}S_j$ by a suitable choice of each pair of domains $\Rcal_j$ and $S_j$.

Taking into account properties $(1_j), \ (2_j) \ \textrm{and } (6_j)$, we infer that the sequence of maps $\{h_j \colon \ol{\Rcal} \setminus E \to \c^2\}_j$, that are holomorphic and extend to meromorphic maps on $\ol{\Rcal}$, converges on $\ol{\Rcal} \setminus E$ to a continuous limit map
\begin{equation*}
\hat{h} = \lim_{j\to \infty}{h_j} \colon \ol{\Rcal} \setminus E \to \c^2,
\end{equation*}
which is meromorphic on $\Rcal$.
%
Let us check its properties.

It follows from the construction and condition $(4_j)$ that the map $\hat{h}-h$ is holomorphic on $\Rcal$, vanishing to any given finite order on $E \cup \Lambda$.

Fix some $p\in \ol{\Rcal}$ and $j\in \n$. Since
\begin{equation} \label{eq:hath-hj<epsilonj}
|\hat{h}-h_j|(p) \leq \sum_{k=j}^{\infty}|h_{k+1}-h_k|(p)  \stackrel{\text{\rm (2$_{k+1}$)}}{<} \sum_{k=j}^{\infty}\varepsilon_{k+1}<2\varepsilon_{j+1}<\varepsilon_j,
\end{equation}
$\hat{h}$ is an immersion on $\Rcal_{j-1} \setminus S_{j-1}$ and $\dist_{\hat{h}}(p_0,b\ol{\Rcal}_{j-1})>j-1$ due to condition $(6_j)$. This holds for all $j\in \n$, implying that $\hat{h}$ is a complete immersion on $\Rcal \setminus E$.
Moreover, the above estimate for $j=0$ gives
\begin{equation*}
|\hat{h}-h|(p)=|\hat{h}-h_0|(p)<\varepsilon_0=\varepsilon, \quad p\in \ol{\Rcal},
\end{equation*}
proving assertion (i) in the theorem.

Let $p\neq q$ be two distinct points on $b\ol{\Rcal}$ and take $j_0 \in \n$ such that $d(p,q)>1/j_0$. Pick some $j>j_0$. Then
\begin{eqnarray*}
|h_{j-1}(p)-h_{j-1}(q)| & \leq & |h_j(p)-h_{j-1}(p)| + |h_j(q)-h_{j-1}(q)| + |h_j(p)-h_j(q)| \\
	&  \stackrel{\text{\rm (2$_j$)}}{<} & 2\varepsilon_j + |h_j(p)-h_j(q)| \\
	&  \stackrel{\text{\rm (6$_j$)}}{<} & 2 \cdot \frac{1}{2j^2} \left|h_{j-1}(p)-h_{j-1}(q)\right| + |h_j(p)-h_j(q)|,
\end{eqnarray*}
thus
\begin{equation*}
|h_j(p)-h_j(q)| > \left(1-\frac{1}{j^2}\right) |h_{j-1}(p)-h_{j-1}(q)|
\end{equation*}
and
\begin{equation*}
|h_{j_0+l}(p)-h_{j_0+l}(q)| > \prod_{j=j_0+1}^{j_0+l} \left(1-\frac{1}{j^2}\right) |h_{j_0}(p)-h_{j_0}(q)|, \quad l\in \n.
\end{equation*}
In the limit as $l\to \infty$, 
\begin{equation*}
|\hat{h}(p)-\hat{h}(q)| \geq \frac{1}{2} \left|h_{j_0}(p)-h_{j_0}(q)\right| \stackrel{\text{\rm (5$_{j_0}$)}}{>} 0,
\end{equation*}
proving that the restriction $\hat{h}|_{b\ol{\Rcal}}$ is an embedding.

We claim that the union of boundary Jordan curves $\hat{h}(b\ol{\Rcal})$ has Hausdorff dimension one. (We refer to \cite[Sect. 2.3]{morgan2016geometric} for the basics on Hausdorff measure and dimension.) Indeed, properties $(6_j), \ (7_j)$ and \eqref{eq:hath-hj<epsilonj} imply
\begin{equation*}
\max \{ \dist(p,A_{j}) \colon p\in \hat{h}(b\ol{\Rcal}) \} <\frac{2}{\tau_{j}^{j}}, \quad j\in \n.
\end{equation*}
By $(7_j)$, the set $A_{j} \subset h_j(b\ol{\Rcal})$ consists of $\tau_{j}^{j+1}$ points for all $j\in \n$. Denote them by $a_t$ for $t \in \{1, \dots, \tau_{j}^{j+1} \}$, and consider balls $\b_t(a_t,4/\tau_{j}^{j}) \subset \c^2$ centered at points $a_t$ and of radii $4/\tau_{j}^{j}$. Their union covers $\hat{h}(b\ol{\Rcal})$. Since
\begin{equation*}
\tau_{j}^{j+1} \cdot \left(\frac{4}{2\tau_{j}^{j}} \right)^{1+1/j} = 2^{1+1/j} \leq 4 < +\infty, \quad j\in \n,
\end{equation*}
the $(1+1/j)$-dimensional Hausdorff measure of $\hat{h}(b\ol{\Rcal})$ is finite for all $j\in \n$. This implies that the Hausdorff measure of $\hat{h}(b\ol{\Rcal})$, denoted by $\Hcal^1(\hat{h}(b\ol{\Rcal}))$, is finite, thereby its Hausdorff dimension is bounded above by one.
However, $\hat{h}(b\ol{\Rcal})$ is a pairwise disjoint union of Jordan curves, so it has Hausdorff dimension bounded below by one. We conclude that the Hausdorff dimension of $\hat{h}(b\ol{\Rcal})$ equals one, as required, thus proving the theorem. (See \cite[Sect. 2.1]{martin2007jordan} for a similar argument.)

\begin{remark} \label{rem:n>2}
An analogous proof applies in the case of maps $h\colon \ol{\Rcal} \setminus E \to \c^n$ for $n\geq 3$.
At the $j$-th step of induction, proceeding as before, we may additionally assume that $h_j$ is an embedding on the compact set $\ol{\Rcal} \setminus S_j$, applying general position.
Moreover, one shall replace the first part of condition $(6_j)$ by asking
\begin{equation*}
0 < \varepsilon_{j} < \min \left\{ \frac{\varepsilon_{j-1}}{2}, \frac{1}{\tau_{j}^{j}}, \frac{1}{2j^2} \inf\left\{|h_{j-1}(p)-h_{j-1}(q)| \colon p,q\in \ol{\Rcal} \setminus S_{j-1}, d(p,q)>\frac{1}{j} \right\} \right\}.
\end{equation*}
This grants that the limit map $\hat h \colon \ol{\Rcal}\setminus E \to \C^n$ is a topological embedding.
\end{remark}


\section{Removing countably many discs} \label{sec:countably}

\noindent
In this section we prove a generalized version of Theorem \ref{th:main} -- we ask the domain to be a complement in a compact Riemann surface of a countable union of pairwise disjoint smoothly bounded closed discs. In the case when the union is finite, the boundary of such domain consists of finitely many smooth closed Jordan curves, hence the domain is a bordered Riemann surface. This situation is considered in Theorem \ref{th:main}. The precise formulation of the generalized theorem is the following.

%
%
\begin{theorem} \label{th:countably}
Let $\Rcal$ be a compact Riemann surface and let $\Mcal = \Rcal \setminus \bigcup_{i=0}^{\infty}D_i$ be a domain in $\Rcal$ whose complement is a countable union of pairwise disjoint smoothly bounded closed discs $D_i$. Let $E, \Lambda \subset \Mcal$ be finite disjoint subsets. Set $\Mcal_0 = \Rcal \setminus \mathring D_0$.
If $h\colon \Mcal_0 \setminus E \to \c^n$ ($n\geq 2$) is a continuous map which is meromorphic on $\mathring \Mcal_0$ having poles at all points in $E$, then for any positive number $\varepsilon >0$ there exists a continuous map $\hat{h} \colon \ol{\Mcal} \setminus E \to \c^n$, which is a meromorphic immersion on $\Mcal$, satisfying the following properties:
\begin{itemize}
\item[\rm (i)] the difference $h-\hat{h}$ is holomorphic on $\Mcal$, vanishing to any given finite order on $E \cup \Lambda$;

\smallskip
\item[\rm (ii)] $|h-\hat{h}|(p) < \varepsilon \ \textrm{for all }p \in \ol{\Mcal}$;

\smallskip
\item[\rm (iii)] $\hat{h} \colon \Mcal \setminus E \to \c^n$ is complete;

\smallskip
\item[\rm (iv)] $\hat{h}|_{b\ol{\Mcal}} \colon b\ol{\Mcal} \to \c^n$ is a topological embedding and $\hat{h}(b\ol{\Mcal}) = \bigcup_{i=0}^{\infty}\hat{h}(bD_i)$ consists of a pairwise disjoint union of Jordan curves we may choose of Hausdorff dimension one.
\end{itemize}
Furthermore, if $n\geq 3$, then $\hat{h} \colon \ol{\Mcal} \setminus E \to \c^n$ can be chosen an embedding with these properties, provided that $h|_{\Lambda}$ is injective.
\end{theorem}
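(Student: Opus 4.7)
The plan is to exhaust $\ol\Mcal$ by bordered Riemann surfaces and iteratively apply Theorem~\ref{th:main}, following the scheme of the proof of Theorem~\ref{th:main} in Section~\ref{sec:proof-main}. Enumerate the discs and set $\Ncal_k=\Rcal\setminus\bigcup_{i=0}^k D_i$; then $\Ncal_k$ is a bordered Riemann surface with $b\ol{\Ncal_k}=\bigcup_{i=0}^k bD_i$, the family $\{\ol{\Ncal_k}\}$ is nested decreasing with intersection $\ol\Mcal$, and $\Ncal_0=\mathring\Mcal_0$. Fix a base point $p_0\in\Mcal\setminus(E\cup\Lambda)$, a distance function $d$ on $\Rcal$, and an interpolation order $m\in\N$. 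By Mergelyan approximation and general position, take an initial $h_0\in\IE(\Ncal_0,\c^n)$ agreeing with $h$ to order $m$ on $E\cup\Lambda$, with $|h-h_0|<\varepsilon/2$ on $\ol\Mcal$, and with $h_0|_{bD_0}$ an embedding (and, when $n\ge 3$, $h_0$ an embedding on $\ol{\Ncal_0}\setminus E$).

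Inductively, given $h_{k-1}\in\IE(\Ncal_{k-1},\c^n)$ for $k\ge 1$, apply Theorem~\ref{th:main} to the bordered Riemann surface $\Ncal_k$ with input $h_{k-1}|_{\ol{\Ncal_k}\setminus E}$, interpolation order $\max(m,k)$ on $E\cup\Lambda$, and tolerance $\varepsilon_k>0$ (to be specified). The output $h_k\in\IE(\Ncal_k,\c^n)$ satisfies: $|h_k-h_{k-1}|<\varepsilon_k$ on $\ol{\Ncal_k}$; the difference $h_k-h_{k-1}$ is holomorphic on $\Ncal_k$ and vanishes to order $\max(m,k)$ on $E\cup\Lambda$; $h_k\colon\Ncal_k\setminus E\to\c^n$ is a complete meromorphic immersion (embedding on $\ol{\Ncal_k}\setminus E$ if $n\ge 3$); and $h_k|_{b\ol{\Ncal_k}}$ is a topological embedding whose image is a disjoint union of Jordan curves of Hausdorff dimension one. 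Mimicking conditions $(6_j)$--$(7_j)$ from the proof of Theorem~\ref{th:main}, take $\varepsilon_k>0$ summable (so $\sum_k\varepsilon_k<\varepsilon/2$), small enough via Cauchy estimates that any map within $\varepsilon_k$ of $h_{k-1}$ on $\ol{\Ncal_{k-1}}$ remains an immersion (embedding when $n\ge 3$) on $\ol{\Ncal_{k-1}}\setminus E$ and preserves $\dist(p_0,b\ol{\Ncal_j})>j$ for all $j<k$, and further satisfying
\[
\varepsilon_k<\tfrac{1}{2^{k+1}}\inf\bigl\{|h_{k-1}(p)-h_{k-1}(q)|:p,q\in b\ol{\Ncal_{k-1}},\ d(p,q)\ge 1/k\bigr\},
\]
which is strictly positive because $h_{k-1}|_{b\ol{\Ncal_{k-1}}}$ is an embedding on the compact set $b\ol{\Ncal_{k-1}}$.

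Summability of $\{\varepsilon_k\}$ yields uniform convergence of $\{h_k\}$ on $\ol\Mcal\setminus E$ to a continuous limit $\hat h\colon\ol\Mcal\setminus E\to\c^n$, meromorphic on $\Mcal$ with poles at $E$ (by Hurwitz's theorem and interpolation orders tending to $\infty$ on $E\cup\Lambda$), establishing (i) and (ii). For completeness (iii), the stability scheme above gives $\dist_{\hat h}(p_0,b\ol{\Ncal_k})\ge k$ for every $k$; thus for a divergent path in $\Mcal\setminus E$ with limit point $p^*$, either $p^*\in E$ (and $|\hat h|\to\infty$ near $p^*$) or $p^*\in bD_{i^*}\subset b\ol{\Ncal_k}$ for all $k\ge i^*$, forcing $\dist_{\hat h}(p_0,p^*)=\infty$. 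The topological embedding in (iv) follows from the telescoping estimate: for distinct $p,q\in\bigcup_i bD_i$ with $d(p,q)\ge 1/k_0$ and $p,q\in b\ol{\Ncal_{k-1}}$ for all $k\ge k_0$, one obtains $|\hat h(p)-\hat h(q)|\ge\tfrac{1}{2}|h_{k_0-1}(p)-h_{k_0-1}(q)|>0$. Each Jordan curve $\hat h(bD_i)$ has Hausdorff dimension one by the covering estimate $(7_j)$ from the proof of Theorem~\ref{th:main} applied at stage $k\ge i$, combined with sup-norm convergence, and the countable union $\hat h(b\ol\Mcal)=\bigcup_i\hat h(bD_i)$ inherits Hausdorff dimension one since the dimension of a countable union equals the supremum of the individual dimensions.

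The principal obstacle is the simultaneous control, stable under all subsequent perturbations, of embedding, intrinsic diameter, and Hausdorff-dimension covering on every previously constructed boundary circle $bD_i$ ($i<k$) at each stage $k$. This is handled by the quantitative $\varepsilon_k$-scheme above, the natural infinite-stage extension of the one in the proof of Theorem~\ref{th:main}. It closes inductively because $b\ol{\Ncal_{k-1}}$ is compact at each stage, so the required infima and moduli of continuity at stage $k$ are strictly positive, and no diagonalization beyond what the proof of Theorem~\ref{th:main} already accomplishes in one stage is needed.
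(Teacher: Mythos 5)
Your overall scheme --- exhaust $\ol\Mcal$ by the nested decreasing bordered Riemann surfaces $\ol{\Ncal_k}=\Rcal\setminus\bigcup_{i\le k}\mathring D_i$, iterate the main approximation result with controlled tolerances, and pass to the uniform limit --- is the one the paper uses (the paper works from Lemma~\ref{lem:main} plus an auxiliary lemma, while you invoke Theorem~\ref{th:main} wholesale, but that is a cosmetic difference). There is, however, a genuine gap in your completeness argument coming from the distinction between \emph{divergent} and \emph{quasidivergent} paths.

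You argue that preserving $\dist_{\hat h}(p_0,b\ol{\Ncal_k})\ge k$ for all $k$ settles (iii), because every divergent path $\gamma$ in $\Mcal\setminus E$ accumulates at some $p^*\in bD_{i^*}\subset b\ol{\Ncal_k}$ for $k\ge i^*$. But by the paper's own definition $\dist_{\hat h}(p_0,b\ol{\Ncal_k})$ is the infimum of $\length(\hat h\circ\delta)$ over paths $\delta$ that are \emph{divergent} in $\Ncal_k$, i.e.\ eventually leave every compact subset of $\Ncal_k\setminus E$. Here the exhaustion $\ol{\Ncal_k}\searrow\ol\Mcal$ is \emph{decreasing}, unlike the increasing exhaustion $\Rcal_j\nearrow\Rcal$ in the proof of Theorem~\ref{th:main}. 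A path $\gamma$ divergent in $\Mcal\setminus E$ has image contained in $\ol\Mcal\setminus E\subset\Ncal_k\setminus E$, and it is in general only \emph{quasidivergent} in $\Ncal_k\setminus E$: it accumulates on $b\ol{\Ncal_k}$ along a subsequence, but may return infinitely often to compact parts of $\Ncal_k\setminus E$ (e.g.\ near discs $D_j$ with $j>k$, which are compactly contained in $\Ncal_k$). So the lower bound $\dist_{\hat h}(p_0,b\ol{\Ncal_k})\ge k$ does not, by itself, bound $\length(\hat h\circ\gamma)$ from below, and the assertion ``$\dist_{\hat h}(p_0,p^*)=\infty$'' for a boundary point $p^*$ is not a consequence of it. This is precisely the extra difficulty that makes the countably-many-ends case harder than Theorem~\ref{th:main}, and it is why your claim that ``no diagonalization beyond what the proof of Theorem~\ref{th:main} already accomplishes is needed'' is too optimistic.

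The paper closes this gap by replacing your stability requirement with the stronger inductive condition $(6_i)$, which bounds $\inf\{\length(g\circ\gamma):\gamma\in\Ccal_{qd}(\Mcal_i,p_0)\}$ from below for every map $g$ uniformly close to $h_i$ on $\Mcal_i$; this is supplied by \cite[Lemma 2.2]{alarcon2021calabi}. An additional subtlety you also omit is that $h_i$ has poles at $E$, so that lemma cannot be applied directly to $\Mcal_i$: the paper first passes to the compact domain $\Mcal_i^*$ obtained by deleting small neighbourhoods of $E$, after observing that any path entering such a neighbourhood already has infinite $h_i$-length, and arranges $\dist_{h_i}(p_0,b\Mcal_i^*)=\dist_{h_i}(p_0,b\Mcal_i)$. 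With these two ingredients supplied, the rest of your write-up (convergence, interpolation, boundary embedding via the telescoping product, Hausdorff dimension from the covering estimate, and the $n\ge 3$ embedding via the sharpened $(7_i)$-type condition) matches the paper's argument.
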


The key in the proof of Theorem \ref{th:countably} is again Lemma \ref{lem:main}, however, the induction process is slightly different to that in the previous result. 

%
%
\begin{proof}[Proof of Theorem \ref{th:countably}]
We shall explain the proof in case of $n=2$. For $n\geq 3$, embeddedness of $\hat{h}$ on $\ol{\Mcal} \setminus E$ follows analogously, as in the proof of Theorem \ref{th:main} (see Remark \ref{rem:n>2}).

Assume that $\Rcal, \Mcal, \Mcal_0, E, \Lambda$ are as in the statement of the theorem. Denote the points in $E$ by $E=\{e_k\}_{k=1}^{\mgot} \ (\mgot \in \n)$. For $i\in \Z_{\geq0}$, we define
\begin{equation*}
\Mcal_i = \Rcal \setminus \bigcup_{j=0}^{i}\mathring D_j;
\end{equation*}
it is a compact bordered Riemann surface with boundary $b\Mcal_i = \bigcup_{j=0}^{i}bD_j$, satisfying
\begin{equation*}
\Mcal_0 \supset \Mcal_1 \supset \Mcal_2 \supset \dots \supset \bigcap_{i=0}^{\infty}\Mcal_i = \ol{\Mcal}.
\end{equation*}
Note that $E \cup \Lambda \subset \mathring \Mcal_i$ for all $i$.
Fix a normal exhaustion $\Kcal_0 \subset \Kcal_1 \subset \dots \subset \bigcup_{i=0}^{\infty}\Kcal_i = \Mcal$ of $\Mcal$, such that $E \cup \Lambda \subset \mathring \Kcal_0$. We let $S_0 \subset \Mcal$ be a bordered domain consisting of a finite union of pairwise disjoint discs with $E \subset S_0, \ \Lambda \cap S_0 = \emptyset, \ \ol{S}_0 \subset \Kcal_0$ (see proof of Theorem \ref{th:main}).
Choose a Riemannian distance function $d$ on $\Rcal$, point $p_0 \in \Mcal \setminus (S_0 \cup \Lambda)$ and positive integer $\tau_0 \in \N$.  
Set $h_0=h$ and $\varepsilon_0 = \varepsilon /2$.
By Mergelyan's theorem and arguing as in step 2 in the proof of Lemma \ref{lem:dist}, we may assume that $h_0 \in \IE(\mathring \Mcal_0, \c^2)$, and by a standard general position argument, assume that the restriction $h_0|_{b\Mcal_0}$ is an embedding.
We shall proceed by induction, applying Lemma \ref{lem:main} and \cite[Lemma 2.2]{alarcon2021calabi}, to construct a sequence $\{ \Pi_i = \{h_i, S_i, \varepsilon_i, \tau_i \}\}_{i=1}^{\infty}$ of maps $h_i \in \IE(\mathring \Mcal_i, \c^2)$, bordered domains $S_i \subset \Mcal$, positive numbers $\varepsilon_i>0$ and integers $\tau_i>i$, satisfying the following properties for all $i\in \n$.
\begin{itemize}
\item[\rm (1$_i$)] 
	$\ol{S}_i \subset S_{i-1}$ and $\max \{d(e_k,bS_{i}^{k}) \colon k=1,\dots, \mgot \} < \frac{1}{i}$,
	where $S_{i}^{k}$ is a disc centered at $e_k$ such that $\bigcup_{k=1}^{\mgot}S_{i}^{k}=S_i$ is a pairwise disjoint union (see proof of Theorem \ref{th:main}).

\smallskip
\item[\rm (2$_i$)] 
	$h-h_i$ is holomorphic on $\mathring \Mcal_i$, vanishing to any given finite order on $E \cup \Lambda$.

\smallskip
\item[\rm (3$_i$)] 
	$||h_i-h_{i-1}||_{0,\Mcal_i} < \varepsilon_{i-1}$.

\smallskip
\item[\rm (4$_i$)] 
	$\dist_{h_i}(p_0,b\Mcal_i)>i$.

\smallskip
\item[\rm (5$_i$)] 
	$h_i|_{b\Mcal_i}$ is an embedding.

\item[\rm (6$_i$)] 
	For every continuous map $g\colon \Mcal_i \setminus E \to \c^2$ such that $g-h_i$ is continuous on $\Mcal_i$ and satisfying $||g-h_i||_{0,\Mcal_i}<2\varepsilon_i$, we have
	\begin{equation*}
	\inf \{ \length(g\circ \gamma) \colon \gamma \in \Ccal_{qd}(\Mcal_i,p_0) \} \geq \dist_{h_i}(p_0,b\Mcal_i)-1 > i-1.
	\end{equation*}

\smallskip
\item[\rm (7$_i$)] 
	$0 < \varepsilon_{i} < \min \left\{ \frac{\varepsilon_{i-1}}{2}, \frac{1}{2\tau_{i}^{i}}, \frac{1}{2i^2} \inf\left\{|h_{i}(p)-h_{i}(q)| \colon p,q\in b\Mcal_i, d(p,q)>\frac{1}{i} \right\} \right\}$,
	and such that if $\tilde{g} \colon \Mcal \setminus E \to \c^2$ is holomorphic with poles in $E$ and $|\tilde{g}-h_i|<2\varepsilon_i$ on $\Kcal_i \setminus S_i$, then $\tilde{g}$ is an immersion on $\Kcal_{i-1} \setminus S_{i-1}$.

\smallskip
\item[\rm (8$_i$)]
	$\tau_i>\tau_{i-1}$, and for each nonnegative integer $j\leq i$ there is a set $A_{i,j}\subset h_i(bD_j)$ of cardinality $\tau_{i}^{i+1}$ so that
	\begin{equation*}
	\max \{ \dist(p,A_{i,j}) \colon p\in h_i(bD_j) \} < \frac{1}{\tau_{i}^{i}}.
	\end{equation*}
\end{itemize}
Firstly, $\Pi_0$ satisfies conditions $(2_0), \ (4_0), \ (5_0)$ and the second part of $(8_0)$, provided that $\tau_0$ is large enough. Other conditions are void.
Now assume we have constructed sequences $\Pi_0, \dots, \Pi_{i-1}$ for some $i\in \n$.
Applying Lemma \ref{lem:main} to $h_{i-1}|_{\Mcal_i}$, we obtain a map $h_i \in \IE(\mathring \Mcal_i,\c^2)$ such that $(2_i)$--$(5_i)$ hold. Taking $\tau_i \in \n$ with
\begin{equation*}
\tau_i > \tau_{i-1} + \sum_{j=0}^{i} \length(h_i(bD_j))
\end{equation*}
fulfills condition $(8_i)$. Since $h_i|_{b\Mcal_i}$ is an embedding by $(5_i)$, we may choose a number $\varepsilon_i>0$ satisfying the first part of $(7_i)$. Passing to a smaller $\varepsilon_i>0$ if necesssary, we can ensure $(6_i)$. Finally, choose $S_i$ small enough to satisfy $(1_i)$ and the second part of $(7_i)$. In addition, we may guarantee $E=\bigcap_{i=0}^{\infty}S_i$.

Note that $(6_i)$ is obtained by \cite[Lemma 2.2]{alarcon2021calabi}. 
Let $\gamma$ be a path in $\Mcal_i$ connecting $p_0$ with some point in $E$. Since $E$ is the polar set of $h_i$ (for all $i$), the path $h_i \circ \gamma$ has infinite Euclidean length. So, we can consider a set $\Mcal_{i}^{*} \subset \Mcal_i$, i.e., a smoothly bounded compact domain which is the complement in $\Mcal_i$ of small open neighbourhoods of points in $E$. We choose it so that
\begin{equation*}
\dist_{h_i}(p_0,b\Mcal_{i}^{*}) = \dist_{h_i}(p_0,b\Mcal_i).
\end{equation*}
We infer that
\begin{eqnarray*}
\inf \{ \length(g\circ \gamma) \colon \gamma \in \Ccal_{qd}(\Mcal_i,p_0) \} & \geq & 
		\inf \{ \length(g\circ \gamma) \colon \gamma \in \Ccal_{qd}(\Mcal_{i}^{*},p_0) \} \\
	& \geq & \dist_{h_i}(p_0,b\Mcal_{i}^{*})-1 \\
	& = & \dist_{h_i}(p_0,b\Mcal_i)-1,
\end{eqnarray*}
where the second inequality is obtained by applying \cite[Lemma 2.2]{alarcon2021calabi} to $\Mcal_{i}^{*}$ and $g$ as in $(6_i)$.

By $(3_i)$ and $(7_i)$, it follows that the sequence of maps $\{h_i \colon \Mcal_i \setminus E \to \c^2\}_i$, that are holomorphic and extend to meromorphic maps on $\Mcal_i$ with poles in $E$, converges on $\ol{\Mcal} \setminus E$ to a continuous limit map
\begin{equation*}
\hat{h} = \lim_{i\to \infty}h_i \colon \ol{\Mcal}\setminus E \to \c^2,
\end{equation*}
which is meromorphic on $\Mcal$ with poles at all points in $E$.

Fix some $p\in \ol{\Mcal}$ and $i\in \n$. Then
\begin{equation} \label{eq:hath-hj<2epsilonj}
|\hat{h}-h_{i}|(p) \leq \sum_{j=i}^{\infty}|h_{j+1}-h_j|(p) \stackrel{\text{\rm (3$_{j+1}$)}}{<} \sum_{j=i}^{\infty}\varepsilon_j < 2\varepsilon_i
\end{equation}
and by $(7_i)$, $\hat{h}$ is an immersion on $\Kcal_{i-1}\setminus S_{i-1}$. As this holds for all $i\in \n$, $\hat{h}$ is an immersion on $\Mcal \setminus E$.
Moreover,
\begin{equation*}
|\hat{h}-h|(p) < 2\varepsilon_0 = \varepsilon, \quad p\in \ol{\Mcal},
\end{equation*}
proving property (ii) in the theorem.
Assertion (i) is implied by conditions $(2_i)$ for all $i$.

To check the completeness, let us first extend $\hat{h}$ to a continuous map $\hat{h}\colon \Mcal_i \setminus E \to \c^2$, preserving its properties and satisfying \eqref{eq:hath-hj<2epsilonj} for all $p\in \Mcal_{i}$ (and $i$). 
Let $\gamma \colon [0,1) \to \Mcal\setminus E$ be a divergent path with the initial point $\gamma(0) = p_0$. Take an increasing sequence of numbers $0<t_1<t_2< \dots <1$ satisfying $\lim_{i\to \infty}t_i=1$ and $\lim_{i\to \infty}\gamma(t_i) = p\in b\ol{\Mcal}$. Clearly, this means $p\in bD_{i'}$ for some $i' \in \n$, implying that $p\in b\Mcal_i$ for all $i\geq i'$; thus we obtain a quasidivergent path $\gamma$ in $\Mcal_i \setminus E$ for all $i\geq i'$.
Properties \eqref{eq:hath-hj<2epsilonj} and $(6_i)$ give
\begin{equation*}
\length(\hat{h}\circ \gamma) \geq \dist_{h_i}(p_0,b\Mcal_i)-1 > i-1, \quad i\geq i'.
\end{equation*}
In the limit as $i\to \infty$, it follows that $\length(\hat{h}\circ \gamma) = +\infty$, proving condition (iii).

Arguing as in the proof of Theorem \ref{th:main}, it turns out that the restriction $\hat{h}|_{b\ol{\Mcal}}$ is an embedding (recall $\ol{\Mcal} \subset \Mcal_i$, $(3_i)$, $(5_i)$ and $(7_i)$).

Finally, the fact that each boundary Jordan curve $\hat{h}(bD_j)$, $j\in \z_{\geq 0}$, can be chosen of Hausdorff dimension one is seen as in the proof of Theorem \ref{th:main}, by using properties $(7_i), \ (8_i)$ and \eqref{eq:hath-hj<2epsilonj}.
\end{proof}
%
%
%


\subsection*{Acknowledgements}
This research was partially supported by the State Research Agency (AEI) via the grant no.\ PID2020-117868GB-I00, funded by MCIN/AEI/10.13039/501100011033/, Spain.
The author would like to thank Franc Forstneri{\v{c}} for his helpful remarks and proposing the problem, as well as the referee for the careful review and valuable comments which led to an improved presentation.





\medskip
\noindent Tja\v{s}a Vrhovnik

\noindent Departamento de Geometr\'{\i}a y Topolog\'{\i}a e Instituto de Matem\'aticas (IMAG), Universidad de Granada, Campus de Fuentenueva s/n, E--18071 Granada, Spain.

\noindent  e-mail: {\tt vrhovnik@ugr.es}

\end{document}